\author{Min Liu, Shengwei Han and Isar Stubbe}
\thanks{Corresponding author: Min Liu}
\address{School of Sciences, Chang'an University \\ 710064, Xi'an, China\\[5pt] School of Mathematics and Statistics, Shaanxi Normal University\\ 710062, Xi'an, China\\[5pt] Laboratoire de Math\'{e}matiques Pures et Appliqu\'{e}es, Universit\'{e} du Littoral\\ 50 rue F.~Buisson, 62228 Calais, France\\}
\title{Ideals and continuity \isar{for}\\
quantaloid-enriched categories}
\keywords{\isar{Quantaloid, enriched category, domain theory, fuzzy order}}
\def\distsign{\begin{picture}(0,0)\put(0,0){\circle{4}}\end{picture}}
\def\dist{\mbox{$\xymatrix@1@C=5mm{\ar@{->}[r]|{\distsign}&}$}}
\def\Q{\mathcal{Q}}
\def\bbA{\mathbb{A}}
\def\bbB{\mathbb{B}}
\def\bbC{\mathbb{C}}
\def\Dist{\mathrm{\bf Dist}}
\def\Cat{\mathrm{\bf Cat}}
\def\P{\mathcal{P}}
\def\colim{\mathrm{colim}}
\def\:{:}
\renewcommand{\sup}{\mathrm{sup}}
\newcommand\nat{\natural}
\newcommand\isar[1]{{\color{red}{#1}}}
\newcommand\isarnote[1]{\marginpar{\footnotesize\raggedright\isar{{#1}}}}
\newcommand\isar[1]{{\color{black}{#1}}}
\newcommand\isarnote[1]{\marginpar{\footnotesize\raggedright\isar{{}}}}
\begin{document}

\maketitle

\begin{abstract}
\isar{We study} ideals \isar{in,} and continuity \isar{of,} quantaloid-enriched categories (${\Q}$-categories for short) as a \isar{`many-valued and many-typed'} generalization of \isar{domain theory}. \isar{Abstractly, for} any \isar{(}saturated\isar{)} class $\Phi$ of presheaves\isar{,} we \isar{define and} study the \isar{$\Phi$-}continuity of ${\Q}$-categories. \isar{Concretely,} \isar{we compute three examples of such} saturated classes of presheaves \isar{--} the class of flat ideals, the class of irreducible ideals and the class of conical ideals \isar{--} which are
proper generalizations of ideals in domain theory.
\end{abstract}

\section{Introduction}\label{S1}

Domain theory \cite{AbramJung,Gie2003} originated from the work of Dana Scott \cite{Scott1970,Scott1972} \isar{as} a formal basis for the semantics of programming languages. \isar{Through the study of special properties of partially ordered sets, it} formalizes the intuitive ideas of approximation and convergence in a very general way. Since America and Rutten's \cite{America} \isar{and Smyth's \cite{Smyth1988}} early work on solving recursive domain equations in the category of metric spaces, \isar{several other papers \cite{Rutten1996,Flagg1997,Wagner1994,Wagner1997,HW2011,Matthews1992,ONeill1998,Waszkiewicz2003}} have been \isar{written} to \isar{compare and unify both} the \isar{posetal} and the \isar{metric} approach to solving domain equations. \isar{This lead to so-called quantitative domain theory, where one typically replaces posets, resp.\ metric spaces, by quantale-enriched categories, and one develops general concepts of approximation and convergence for these. With this paper we want to contribute to this line of research, replacing now quantales by quantaloids.}

\isar{Whereas a \emph{quantale} is a monoid in the symmetric
monoidal closed category {\bf Sup} of complete lattices and
supremum-preserving morphisms, a \emph{quantaloid} is a category
enriched in {\bf Sup}.} Viewing \isar{a quantaloid} ${\Q}$ as a
\isar{(locally posetal)} bicategory \cite{benabou1967}, it is
natural to study categories, functors and distributors enriched in
${\Q}$
\isar{\cite{Walters1981,Street1983a,Stubbe20051,Stubbe20052,Stubbe20053}}.
As a special case of categories enriched in a bicategory,
quantaloid-enriched categories have special properties and important
applications. Walters \cite{Walters1982} \isar{uses
${\Q}$-categories to describe the} topos of sheaves on a small site
\isar{(see also \cite{HeymansStubbe2012}), and} Rosenthal
\cite{Rosenthal1995,Rosenthal1996} \isar{relates ${\Q}$-categories
to} automata theory. \isar{More recently, also (many-valued)
topological concepts have been formalized in the context of
quantaloid-enriched categories
\cite{CleHof2009,Hohle2014,laishen,LiuZhao20131,Shen2016,ShenTao2016,ShenTholen2016,Shenzhang2013}.}

Since quantales are \isar{precisely} quantaloids \isar{with a single object}, complete residuated lattices and frames are all special quantaloids. Thus, quantale-enriched categories \cite{Wagner1997,LaiZhang2007} and $L$-ordered sets \cite{Bel2002,Fan2001,Yao2010} are special quantaloid-enriched categories. The theory of quantaloid-enriched categories can be viewed as a many-typed extension of the theory of quantale-enriched categories. This enables us to interpret some structures that can not be treated in the framework of quantale-enriched categories. For instance, partial metric spaces and probabilistic partial metric spaces can be treated as quantaloid-enriched categories \cite{Stubbe2014,Hohle2011,He2018}, \isar{yet} they cannot be quantale-enriched categories because they cannot be treated with only one type.

\isar{In} \cite{Stubbe2007}, Stubbe \isar{studied} ${\Q}$-categories as crucial mathematical structure \isar{for} a \isar{so-called} dynamic logic as common mathematical foundation for dynamic phenomena in both computer science and physics. In that paper, totally continuous cocomplete ${\Q}$-categories were studied as ``dynamic domains''. However, just as Stubbe claimed in \cite{Stubbe2007}, totally continuous cocomplete ${\Q}$-categories are very strong structures. One can argue that, having abandoned the notion of directedness, their usefulness in computation is rather limited. So it is definitely an interesting project to investigate how a notion of directedness can be brought back again. Over the past decade, research progress in this area is slow \isar{and mostly} restricted to \isar{frame- or quantale-enriched categories}.

In this paper, \isar{we further develop Stubbe's work \cite{Stubbe2007} by defining continuity of ${\Q}$-categories relative to any (saturated) class of presheaves.} \isar{W}e propose several saturated classes of presheaves that are natural generalizations of the concept of ideals in \isar{(posetal)} domain theory. Thus we establish quantitative domain theory based on ${\Q}$-categories\isar{, which} enables us to unify the approach of quantale-enriched categories and the approach of partial metric spaces into one framework.

The contents of the paper are arranged as follows. Section \ref{S2} lists some preliminary notions and results about quantaloid\isar{s} and quantaloid-enriched categories. In Section \ref{S3}, based on a \isar{(}saturated\isar{)} class $\Phi$ of presheaves, $\Phi$-cocompleteness of ${\Q}$-categories is studied. In Section \ref{S4}, \isar{three} saturated classes of presheaves are given\isar{:} the class of all irreducible ideals, the class of all flat ideals\isar{,} and the class of all conical ideals. In Section \ref{S5}, \isar{$\Phi$}-continuity in ${\Q}$-categories is defined and studied as a generalization of continuity in domain theory\isar{, and several examples are computed} to show the non-triviality of the generalization. In Section \ref{S6}, $\Phi$-algebraic ${\Q}$-categories are defined and studied. \isar{We end with a conclusion in Section \ref{S7}.}

\section{\isar{Preliminaries on q}uantaloid-enriched categories}\label{S2}

A \emph{quantaloid} ${\Q}$ \cite{Rosenthal1996} is a category enriched in the symmetric monoidal closed category {\bf Sup} of complete lattices and morphisms that preserve arbitrary \isar{suprema}. In elementary terms, a quantaloid ${\Q}$ is a category for which each \isar{hom}-set ${\Q}(p,q)$ is a complete lattice, in such a way that, for all $u, u_{i}\in {\Q}(p, q)$ and all $v, v_{j}\in {\Q}(q, r)$,
\begin{equation}\label{A0}
\isar{v\circ(\bigvee_{i}u_{i})=\bigvee_{i}(v\circ u_{i})\quad\mbox{ and }\quad(\bigvee_{j}v_{j})\circ u=\bigvee_{j}(v_{j}\circ u).}
\end{equation}
\isar{Given any arrow $u:p\to q$ and object $r$ in $\Q$, both $-\circ u:\Q(q,r)\to\Q(p,r)$ and $u\circ-:\Q(r,p)\to\Q(r,q)$ preserve suprema between complete lattices and thus have a right adjoint; we shall denote these by $(-\circ u)\dashv(-\swarrow u)$ and $(u\circ-)\dashv (u\searrow-)$.}

Next, we shall recall some basic \isar{facts} about quantaloid-enriched categories from \cite{Stubbe20051}. \isar{From now on, ${\Q}$ always denotes a small quantaloid, and ${\Q}_{0}$ is the set of its objects.}

A \emph{$\Q$-typed set} $X$ is a (small) set $X$ together with a mapping $t: X\to \Q_{0}: x\mapsto tx$ assigning each element $x \in X$ to its type $tx \in \Q_{0}$. Throughout the paper, we use $tx$ to denote the type of an element $x$ in a $\Q$-typed set $X$.
\begin{definition}\label{AA} A {\sl$\Q$-enriched category}
\footnote{\isar{The definition for $\Q$-category given here is different from that used
by Stubbe in a series of papers \cite{Stubbe20051,Stubbe20052,Stubbe2007,Stubbe2014}
(but agrees with that in the papers \isar{\cite{laishen,Shen2016,Tao2014})
in the choice for the direction of the hom-arrows.
We refer to \cite[Proposition 3.8]{Stubbe20051}:
what we call a $\Q$-category in this paper is precisely a $\Q^{op}$-category for Stubbe \cite{Stubbe20051}
(which itself is the ``opposite'' of a $\Q$-category in the terminology of \cite{Stubbe20051}).
As a consequence, also the direction of the $\Q$-distributors in this paper (see further) is opposite to that in \cite{Stubbe20051}.
Modulo this duality, all notions presented here (presheaf, colimit, ...) agree with those in \cite{Stubbe20051}.}}}
(or {\sl$\Q$-category} for short) ${\bbA}$ consists of a $\Q$-typed set ${\bbA}_{0}$ \isar{together with}
hom-arrows ${\bbA}(a', a): ta'\to ta$ in $\Q$ for all $a,a'\in {\bbA}_{0}$ satisfying
\begin{equation}\label{}
\isar{{\bbA}(a', a'')\circ {\bbA}(a, a')\leq {\bbA}(a, a'')\quad\mbox{ and }\quad 1_{ta}\leq {\bbA}(a, a)}
\end{equation}
\isar{for all $a,a',a''\in {\bbA}_{0}$}.
\end{definition}
\isarnote{Removed restriction to skeletal cats. Added comments where appropriate further on.} There is a natural underlying preorder on ${{\bbA}_{0}}$ \isar{defined} by
\begin{equation}\label{}
x\leq y\quad\isar{\stackrel{\mathrm{def}}{\Longleftrightarrow}}\quad tx=ty\mbox{ and }1_{tx}\leq {\bbA}(x, y).
\end{equation}
\isar{When both $x\leq y$ and $y\leq x$ hold in $(\bbA_0,\leq)$, then we write $x\cong y$ and say that these are \emph{isomorphic} elements;} and ${\bbA}$ is called \emph{skeletal} when \isar{$x\cong y$ always} implies $x=y$.
A $\Q$-category ${\bbB}$ is a \emph{\isar{full} $\Q$-subcategory} of ${\bbA}$ if ${{\bbB}_{0}}\subseteq {{\bbA}_{0}}$ and ${\bbB}(x, y)={\bbA}(x, y)$ for all $x, y\in {\bbB}_{0}$; \isar{and then $\bbB$ is skeletal whenever $\bbA$ is.}
\begin{definition}\label{AB}\isarnote{Made definition.}
A \emph{$\Q$-functor} $F: {\bbA}\to {\bbB}$ between $\Q$-categories is a type-preserving map $F: {\bbA}_{0} \to {\bbB}_{0}$ such that
\begin{equation}\label{}
{\bbA}(a', a)\leq{\bbB}(Fa', Fa)
\end{equation}
for all $a,a'\in{\bbA}_{0}.$
\end{definition}
\isar{The $\Q$-functor $F\:\bbA\to\bbB$ is \emph{fully faithful} when the inequality in the definition above is in fact an equality (for all $a,a'\in\bbA_0$).}

With the pointwise order on $\Q$-functors, given by
\begin{equation}\label{}
F\leq G: {\bbA}\to {\bbB} \quad\isar{\stackrel{\mathrm{def}}{\Longleftrightarrow}}\quad  Fx\leq Gx \mathrm{\ for\ all\ }x\in {\bbA}_{0},
\end{equation}
\isar{and the obvious composition and identities}, $\Q$-categories and $\Q$-functors constitute a 2-category $\Cat(\Q)$. \isarnote{Included "equivalence" for this is appropriate for non-skeletal cats.}\isar{Following general 2-categorical principles, a} $\Q$-functor $F: {\bbA} \to {\bbB}$ is called \emph{left adjoint} to a $\Q$-functor $G: {\bbB} \to {\bbA}$ in $\Cat(\Q)$ (and then $G$ is called \emph{right adjoint} to $F$), written $F\dashv G$, if $1_{{\bbA}}\leq G \circ F$ and $F\circ G\leq 1_{{\bbB}}$ (or equivalently ${\bbB}(Fx, y) = {\bbA}(x, Gy)$ for all $x\in {{\bbA}_{0}}, y\in {{\bbB}_{0}}$.) \isar{A $\Q$-functor $F: {\bbA}\to {\bbB}$ is an \emph{equivalence} of ${\Q}$-categories if there exists a ${\Q}$-functor $G: {\bbB}\to {\bbA}$ with $GF\cong 1_{\bbA}, FG\cong 1_{\bbB}$\isar{; in} this case, we write ${\bbA}\simeq{\bbB}$.}
\begin{definition}\label{AC}\isarnote{Made this a definition.}
A \emph{$\Q$-distributor} $\phi\:{\bbA}\dist{\bbB}$ between two $\Q$-categories is given by a \isar{matrix} of ${\Q}$-arrows $\left(\phi(x, y): tx\to ty\right)_{x\in {\bbA}_{0}, y\in {\bbB}_{0}}$ such that
\begin{equation}\label{}
\isar{{\bbB}(y', y)\circ \phi(x', y')\circ {\bbA}(x, x')\leq \phi (x, y)}
\end{equation}
for all $x, x'\in {\bbA}_{0}$ and $y, y'\in {\bbB}_{0}$.
\end{definition}
\isar{Two such $\Q$-distributors $\phi:\bbA\dist\bbB$ and $\psi:\bbB\dist\bbC$ compose with the formula}
\begin{equation}\label{}
\isar{(\psi\circ\phi)(x,z)=\bigvee_{y\in\bbB_0}\psi(y,z)\circ\phi(x,y).}
\end{equation}
\isar{The identity $\Q$-distributor on ${\bbA}$ is given by the hom-arrows ${\bbA}: {\bbA}\dist{\bbA}$, and together with} the pointwise local order inherited from $\Q$, $\Q$-categories and $\Q$-distributors \isar{then form} a (large) quantaloid $\Dist(\Q)$. \isar{This means in particular that $\Dist(\Q)$ is itself a 2-category, and so we can speak of left/right adjoint $\Q$-distributors in the obvious manner: for $\phi\:\bbA\dist\bbB$ and $\psi\:\bbB\dist\bbA$ we write $\phi\dashv\psi$ whenever $\bbA\leq\psi\circ\phi$ and $\phi\circ\psi\leq\bbB$. But the quantaloid $\Dist(\Q)$ is also ``closed'', i.e.\ both $\phi\circ-$ and $-\circ\phi$ have right adjoints (denoted by $\phi\searrow-$ and $-\swarrow\phi$ respectively). These relate to the adjoints to composition in $\Q$ in the following manner: for any $\phi\:\bbA\dist\bbB$, $\gamma\:\bbA\dist\bbC$ and $\psi\:\bbC\to\bbB$,}
\begin{equation}\label{AY}
\isar{(\phi\searrow\psi)(c,a)=\bigwedge_{b\in\bbB_0}\phi(a,b)\searrow\psi(c,b)
\quad\mbox{and}\quad
(\phi\swarrow\gamma)(c,b)=\bigwedge_{a\in\bbA_0}\phi(a,b)\swarrow\gamma(a,c).}
\end{equation}
Each $\Q$-functor $F: {\bbA}\to {\bbB}$ \isar{determines} $\Q$-distributors\isar{ $F_{\natural}: {\bbA}\dist{\bbB}$ and $F^{\natural}: {\bbB}\dist{\bbA}$ by}
\begin{equation}\label{AX}
\isar{F_{\natural}(x, y)={\bbB}(Fx, y)\quad\mbox{ and }\quad F^{\natural}(y, x)={\bbB}(y, Fx),}
\end{equation}
which form an adjunction $F_{\natural}\dashv F^{\natural}$ in $\Dist(\Q)$. Thus \isar{we have}
\begin{equation}\label{AD}
F_{\natural}\searrow-=F^{\natural}\circ -\quad\mbox{and}\quad -\swarrow F^{\natural}=-\circ F_{\natural}.
\end{equation}
Particularly, $(1_{\bbA})_{\natural}=(1_{\bbA})^{\natural}={\bbA}$, and for two $\Q$-functors $F: {\bbA}\to {\bbB}$ and $G: {\bbB}\to \mathbb{C},$ we have
\begin{equation}\label{AE}
(G\circ F)_{\natural}=G_{\natural}\circ F_{\natural}\quad\mbox{and}\quad (G\circ F)^{\natural}=F^{\natural}\circ G^{\natural}.
\end{equation}
\isar{That is to say, the assignments $F\mapsto F_\nat$ and $F\to F^\nat$ are (covariantly and contravariantly) 2-functorial from $\Cat(\Q)$ to $\Dist(\Q)$; they thus send adjoint functors to adjoint distributors as well. Note however that $F_\nat=F'_\nat$ (or $F^\nat=F'{}^\nat$) only implies $F\cong F'$, and not necessarily $F=F'$.}

For any ${\Q}$-distributor $\phi: {\bbA}\dist{\bbB}$ and $\Q$-functors $F: \mathbb{X}\to {\bbA}, G: \mathbb{Y}\to {\bbB}$, we have
\begin{equation}\label{AF}
\phi(F-, G-)=G^{\natural}\circ\phi\circ F_{\natural}.
\end{equation}

For each $q\in {\Q}_{0}$, \isar{we write $\{q\}$ for the} discrete $\Q$-category with \isar{a single} object $q$, in which $tq=q$ and $\{q\}(q,q)=1_{q}.$
\begin{definition}\label{AG}\isarnote{Made this a definition.}
A \emph{\isar{(contravariant)} presheaf} (also called a \emph{weight}) with type $q$ on a $\Q$-category ${\bbA}$ is a $\Q$-distributor $\mu: {\bbA}\dist \{q\}$. \isar{The $\Q$-typed set of all} presheaves on ${\bbA}$ constitutes a $\Q$-category ${\P}{\bbA}$ \isar{with hom-arrows} ${\P}{\bbA}(\mu,\mu')=\mu'\swarrow\mu$ for all $\mu,\mu'\in ({\P}{\bbA})_{0}.$
\end{definition}
Dually, the $\Q$-category ${\P}^{\dag}{\bbA}$ of \isar{\emph{covariant presheaves} (also \emph{copresheaves})} \cite{Stubbe20051,Shen2016} on ${\bbA}$ has $\Q$-distributors $\lambda: \{q\}\dist {\bbA}$ as objects and ${\P}^{\dag}{\bbA}(\lambda, \lambda')=\lambda'\searrow \lambda$ \isar{as hom-arrows. Note that both $\P\bbA$ and $\P^{\dag}\bbA$ are skeletal $\Q$-categories.}

\isarnote{Moved this up.}The \emph{(covariant) Yoneda embedding} \isar{on a $\Q$-category $\bbA$ is the $\Q$-functor} $Y_{\bbA}: {\bbA}\to{\P}{\bbA}$ \isar{which} sends each $x\in {\bbA}_{0}$ to \isar{the \emph{representable presheaf} ${\bbA}(-, x):\bbA\dist\{tx\}$}. The Yoneda Lemma \cite[Proposition 6.3]{Stubbe20051} \isar{says} that
\begin{equation}\label{YonedaLemma}
{\P}{\bbA}(Y_{\bbA}x, \mu)=\mu(x)
\end{equation}
for all $x\in {\bbA}_{0},\mu\in ({\P}{\bbA})_{0}$. \isar{For two representable presheaves this implies that
\begin{equation}\label{YonedaFF}
\Big((Y_{\bbA})^\nat\circ (Y_{\bbA})_\nat\Big)(x,y)={\P}{\bbA}(Y_{\bbA}x, Y_{\bbA}y)=(Y_{\bbA}y)(x)=\bbA(x,y)
\end{equation}
for all $x,y\in {\bbA}_{0}$. That is to say, the Yoneda embedding is \emph{fully faithful}.}
\isarnote{Moved a remark into a footnote higher up.}

Each $\Q$-functor $F: {\bbA}\to {\bbB}$ \isar{determines a} pair of $\Q$-functors,
\begin{equation}\label{AI}
F^{\to}:{\P}{\bbA}\to{\P}{\bbB}:\isar{\mu\mapsto \mu\circ F^{\natural}}\quad\mbox{ and }\quad F^{\leftarrow}: {\P}{\bbB}\to{\P}{\bbA}:\isar{\lambda\mapsto \lambda\circ F_{\natural}}
\end{equation}
\isar{such that} $F^{\to}\dashv F^{\leftarrow}$ in $\Cat(\Q)$. For $F: {\bbA}\to {\bbB}$ and $G: {\bbB}\to \mathbb{C}$, we have
\begin{equation}\label{AJ}
(G\circ F)^{\to}=G^{\to}\circ F^{\to}\quad\mbox{ and }\quad(G\circ F)^{\leftarrow}=F^{\leftarrow}\circ G^{\leftarrow}.
\end{equation}
\begin{definition}\label{AK}\isarnote{Made this a definition. Put "preservation of colim" in the definition for ease of reference later on.}
Given a $\Q$-functor $F:{\bbB}\to {\bbA}$ and a ${\Q}$-distributor $\theta: {\bbB}\dist\mathbb{C}$, the \emph{colimit} of $F$ weighted by $\theta$ is \isar{(whenever it exists)} a $\Q$-functor ${\colim}(\theta, F): \mathbb{C}\to {\bbA}$ such that
\begin{equation}\label{AL}
\isar{{\colim}(\theta, F)_{\natural}}=F_{\natural}\swarrow\theta.
\end{equation}
\isar{That colimit is \emph{preserved} by a functor $G\:\bbA\to\bbA'$ if $\colim(\theta,G\circ F)$ (exists and) equals $G\circ\colim(\theta,F)$.}
\end{definition}
\isar{Colimits need not be unique, but are always essentially unique (up to isomorphism).}

For a presheaf $\mu: {\bbA}\dist \{q\}$ and a $\Q$-functor $F: {\bbA}\to {\bbB}$, ${\colim}(\mu, F)$ is in principle a $\Q$-functor from $\{q\}$ to ${\bbB}$. But such a $\Q$-functor simply picks out an object of type $q$ in ${\bbB}$. Therefore we will often think of ${\colim}(\mu, F)$ just as being that object \cite{Stubbe20051}. \isarnote{Moved this part.}\isar{In particular, given $\mu\in ({\P}{\bbA})_{0}$, if $\colim(\mu,1_{\bbA})$ exists in $\bbA$, then we call it ``the'' \emph{supremum} of $\mu$, and write it as $\sup(\mu)$. We may thus consider $\sup(\mu)$ as an object of $\bbA$ of type $t\mu$, determined up to isomorphism, such that}
\begin{equation}\label{AM}
{\bbA}(\sup(\mu),-)=\isar{(1_{\bbA})_{\natural}\swarrow\mu}={\bbA}\swarrow\mu.
\end{equation}

Since $F_{\natural}\swarrow \mu=(1_{\bbA}\circ F)_{\natural}\swarrow \mu=((1_{\bbA})_{\natural}\circ F_{\natural})\swarrow \mu=(1_{\bbA})_{\natural}\swarrow F^{\natural}\swarrow \mu$, the colimit of a $\Q$-functor $F: {\bbA}\to {\bbB}$ weighted by a presheaf $\mu\in ({\P}{\bbA})_{0}$ is the supremum of $\mu\circ F^{\natural}$. Furthermore, it is routine to check that the colimit of $F:{\bbB}\to {\bbA}$ weighted by a ${\Q}$-distributor $\theta: {\bbB}\dist\mathbb{C}$ is a $\Q$-functor sending each object $c\in \mathbb{C}_{0}$ to the \isar{supremum} of $\theta(-, c)\circ F^{\natural}$, i.e., ${\colim}(\theta, F)(c)=\sup (\theta(-, c)\circ F^{\natural}).$ Thus, a ${\Q}$-category ${\bbA}$ admits all weighted colimits if and only if every $\mu\in({\P}{\bbA})_{0}$ has a supremum.
\begin{definition}[\cite{Stubbe20051,laishen}]\label{AN} A ${\Q}$-category ${\bbA}$ is said to be \emph{cocomplete} if it satisfies one of the following equivalent conditions:
\begin{enumerate}[label=(\arabic*),nosep]
\item ${\bbA}$ admits all weighted colimits;
\item every $\mu\in({\P}{\bbA})_{0}$ has a supremum;
\item the Yoneda embedding $Y_{\bbA}: {\bbA}\to {\P}{\bbA}$ has a left adjoint
 $\sup: {\P}{\bbA}\to {\bbA}$ in $\Cat(\Q)$.
\end{enumerate}
\end{definition}
Completeness of ${\Q}$-categories is defined dually. A ${\Q}$-category is complete if and only if it is cocomplete, see \cite[Proposition 5.10.]{Stubbe20051}. The category ${\P}{\bbA}$ of presheaves on a ${\Q}$-category ${\bbA}$ is always cocomplete \cite[Proposition 6.4.]{Stubbe20051}: for $\Psi\in ({\P}({\P}{\bbA}))_{0}$ \isar{one can check that}
\begin{equation}\label{AO}
\sup(\Psi)=Y_{\bbA}^{\leftarrow}(\Psi)=\Psi\circ (Y_{\bbA})_{\natural}.
\end{equation}
\isar{Note that this is an equality (and not a mere isomorphism), because $\P\bbA$ is skeletal.}
\begin{example}\label{AP}\isarnote{Made this "example" instead of "remark".} (1) A unital quantale \isar{$(Q,\bigvee,\cdot,1)$} is \isar{precisely (the hom-lattice of)} a quantaloid with \isar{a single} object. If ${\Q}$ is \isar{(the one-object suspension of) }a unital commutative quantale, then ${\Q}$-categories are precisely quantale-enriched categories studied in \cite{LaiZhang2007,Wagner1997}. Since a complete residuated lattice $L$ is a commutative integral quantale, we see that $L$-ordered sets in the sense of B\v{e}lohl\'{a}vek and Fan \cite{Bel2002,Fan2001,Yao2010} are special $\Q$-categories. Thus, ${\Q}$-categories can be interpreted as a generalization of quantale-enriched categories in a many-typed setting.

(2) Let {\bf 2} be the two-element Boolean algebra. Then {\bf 2}-categories are \isar{preordered sets}, {\bf 2}-distributors are ideal relations, {\bf 2}-functors are order-preserving maps and presheaves are lower sets. Thus, ${\Q}$-categories can be interpreted as a generalization of \isar{(pre)ordered sets} in a many-valued and many-typed setting. If ${\bbA}$ is a {\bf 2}-category, then the category ${\P}{\bbA}$ is just the poset of lower sets on ${\bbA}$ with inclusion order.

(3) Let $[0,\infty]_{+} = ([0,\infty],\isar{\bigwedge},+, 0)$ be the quantale whose underlying complete lattice is the extended non-negative real line $[0, \infty]$ equipped with the order $\geq$. A \emph{partial metric} \cite{Matthews1992,Waszkiewicz2003} on a set $X$ is a map $p: X\times X \to [0,\infty)$ which satisfies for all $x, y, z\in X$,
\begin{enumerate}[label=(\roman*),nosep]
\item $p(x, y) = p(y, x)$ (symmetry).
\item $p(x, y) = p(x, x) = p(y, y)$ implies $x = y$ ($\mathrm{T}_{0}$ separation axiom).
\item $p(x, y)\leq p(x, z) - p(z, z) + p(z, y)$ (the sharp triangle inequality).
\item $p(x, x)\leq p(x, y)$ (SSD -- ``small self-distances'')
\end{enumerate}
If we abandon axioms (i) and (ii) and extend the range of $p$ to $[0, +\infty]$, then $p$ is called a \emph{generalized partial \isar{metric}} \cite{PuZhang2012}. Let $\textsf{D}[0,\infty]_{+}$ denote the quantaloid of \isar{\emph{diagonals} in} $[0,\infty]_{+}$. It is remarkable that (generalized) partial metric spaces are exactly $\textsf{D}[0, \infty]_{+}$-categories, \isar{see \cite{Hohle2011,PuZhang2012}, and especially \cite[Example 2.14]{Stubbe2014} and \cite{HS2018}.}

(4) \isar{For any object $A$ in a quantaloid $\Q$, we shall write $\Q^A$ for the $\Q$-category $\P\{A\}$ of contravariant presheaves on the one-object $\Q$-category $\{A\}$. As in \cite[Example 3.9]{Stubbe20051}, we can explicitly describe $\Q^A$: the set $({\Q}^{A})_{0}$ contains all ${\Q}$-arrows with domain $A$; the type of $f:A\to X$ is its codomain $X$; and for $f: A\to X$ and $g:A\to Y$, we have $({\Q}^{A})(g, f)=f\swarrow g$ in $\Q(Y,X)$. As any category of presheaves, $\Q^A$ is (complete and) cocomplete.} Specifically, for $\mu\in({\P}{\Q}^{A})_{0}$, \isar{it follows from (2.6)} that $\sup(\mu)=\mu(1_{A})$ \isar{since the Yoneda embedding $Y_{\{A\}}:\{A\}\to\P\{A\}=\Q^A$ maps the single object of $\{A\}$ to $1_A$.} \isar{Note} that, if ${\Q}$ has more than one element, then ${\Q}^{A}$ is a ${\Q}$-category, which can not fall into the framework of quantale-enriched categories.
\end{example}
\isar{Many more examples can be found in the references}.\isarnote{Removed a remark.}

\section{$\Phi$-cocomplete ${\Q}$-categories}\label{S3}

In order to develop domain theory in ${\Q}$-categories, the first step is to build the concept of directed completeness \cite{Albert,Kelly2005}. Based on the successful experiences of studying domain theory through subset systems, and \isar{taking into account} the fact that there is no standard concept of direct\isar{ed}ness in ${\Q}$-categories, we shall \isar{work abstractly with ${\Q}$-categories which are cocomplete with respect to a suitable class of weights}.

\begin{definition}\label{class}\isarnote{Made this a definition}
\isar{A \emph{class of presheaves} $\Phi$ is determined by} a set $(\Phi{\bbA})_{0}\subseteq(\P{\bbA})_0$ for each ${\Q}$-category ${\bbA}$, such that
\begin{enumerate}[nosep,label=(\roman*)]
\item each $(\Phi{\bbA})_{0}$ contains all representable presheaves on $\bbA$,
\item for every $\mu\in(\Phi{\bbA})_{0}$ and every $\Q$-functor $F: {\bbA}\to {\bbB}$,  $\mu\circ F^{\natural}\in (\Phi{\bbB})_{0}$.
\end{enumerate}
We call $\mu\in(\Phi{\bbA})_{0}$ a \emph{$\Phi$-ideal} in ${\bbA}$.
\end{definition}
\isar{Obviously, the $\Phi$-ideals in $\bbA$ are the objects of a full $\Q$-subcategory $\Phi\bbA$ of $\P\bbA$; we shall write the inclusion functor as $i_{\bbA}:\Phi\bbA\to\P\bbA$. As $\P\bbA$ is skeletal, so is $\Phi\bbA$. The first condition in the above definition says exactly that the Yoneda embedding $Y_{\bbA}:\bbA\to\P\bbA$ co-restricts to a $\Q$-functor $Y_{\bbA}:\bbA\to\Phi\bbA$; the second condition in the above definition says exactly that, for any $\Q$-functor $F:\bbA\to\bbB$, the $\Q$-functor $F^{\to}:{\P}{\bbA}\to{\P}{\bbB}$ (co)restricts to a $\Q$-functor $F^{\to}:{\Phi}{\bbA}\to{\Phi}{\bbB}$.}

From now on, we use $\Phi$ to denote a class of presheaves.
\begin{definition}\label{def-phi-cocomplete}
A ${\Q}$-category ${\bbA}$ is said to be \emph{$\Phi$-cocomplete} if \isar{every $\Phi$-ideal in $\bbA$ has a supremum}, that is, for every $\mu\in(\Phi{\bbA})_{0}$, $\sup(\mu)=\colim(\mu,1_\bbA)$ exists.
\end{definition}
An element $a$ of type $q$ in a ${\Q}$-category ${\bbA}$ \isar{determines, and is determined by,} a ${\Q}$-functor $a:\{q\}\to {\bbA}:q\mapsto a$. Let $\phi: {\bbA}\dist{\bbB}$ be a ${\Q}$-distributor, $b\in{\bbB}_{0}$. Then $\phi(-, b)=b^{\natural}\circ\phi$ is a presheaf on ${\bbA}$. A ${\Q}$-distributor $\phi: {\bbA}\dist{\bbB}$ is said to be a \emph{$\Phi$-distributor}, if for every $b\in {\bbB}_{0},$ $\phi(-, b)$ is a $\Phi$-ideal in ${\bbA}.$ Clearly, a $\Phi$-ideal is a particular $\Phi$-distributor. By the relation between \isar{suprema} of presheaves and \isar{general} weighted colimits, it follows that:
\begin{proposition}\label{Phi-cocomplete}\isarnote{Made it a proposition.}
\isar{For any ${\Q}$-category $\bbA$, the following are equivalent:}
\begin{enumerate}[label=(\arabic*),nosep]
\item ${\bbA}$ is $\Phi$-cocomplete,
\item the co-restricted Yoneda embedding $Y_{\bbA}\:{\bbA}\to\Phi{\bbA}$ is right adjoint \isar{(to $\mu\mapsto\sup(\mu)$)},
\item ${\bbA}$ has all colimits weighted by $\Phi$-ideals,\isarnote{Made shorter.}
\item ${\bbA}$ has all colimits weighted by $\Phi$-distributors.
\end{enumerate}
\end{proposition}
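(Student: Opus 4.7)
The plan is to prove the four-way equivalence via the cycle $(1)\Leftrightarrow(3)\Leftrightarrow(4)$ together with the separate biconditional $(1)\Leftrightarrow(2)$. The implications in which $(1)$ is the conclusion are trivial specializations: $(3)\Rightarrow(1)$ follows by taking $F=1_{\bbA}$ in the definition of weighted colimit, and $(4)\Rightarrow(1)$ follows by viewing a $\Phi$-ideal $\mu\in(\Phi\bbA)_0$ as a $\Phi$-distributor $\mu:\bbA\dist\{t\mu\}$. So the real work is in the three implications starting from $(1)$.

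For $(1)\Rightarrow(3)$, I would use the identity $F_{\natural}\swarrow\mu=(1_\bbA)_{\natural}\swarrow(\mu\circ F^{\natural})$ established just before Definition \ref{AN}, which shows that $\colim(\mu,F)=\sup(\mu\circ F^\natural)$ whenever the right-hand supremum exists. Given $\mu\in(\Phi\bbB)_{0}$ and $F:\bbB\to\bbA$, property (ii) in Definition \ref{class} gives $\mu\circ F^{\natural}\in(\Phi\bbA)_{0}$, and the assumed $\Phi$-cocompleteness of $\bbA$ then provides the required supremum. The step $(3)\Rightarrow(4)$ is a pointwise verification: for a $\Phi$-distributor $\phi:\bbB\dist\bbC$, each presheaf $\phi(-,c)=c^{\natural}\circ\phi$ is a $\Phi$-ideal by definition, hence $\colim(\phi(-,c),F)$ exists by (3); assembling these as $c$ varies produces the functor $\colim(\phi,F):\bbC\to\bbA$.

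For $(1)\Leftrightarrow(2)$, the key computation is that for any $\mu\in(\Phi\bbA)_{0}$ and $a\in\bbA_{0}$, combining \eqref{AM}, \eqref{AY}, the formula for hom-arrows in $\P\bbA$, and the full faithfulness of $i_{\bbA}:\Phi\bbA\to\P\bbA$ gives
\begin{equation*}
\bbA(\sup(\mu),a)=(\bbA\swarrow\mu)(t\mu,ta)=\bbA(-,a)\swarrow\mu=\P\bbA(\mu,Y_{\bbA}a)=\Phi\bbA(\mu,Y_{\bbA}a).
\end{equation*}
Assuming (1), this identity says exactly that $\sup(\mu)$ represents $\Phi\bbA(\mu,Y_{\bbA}-)$, so the assignment $\mu\mapsto\sup(\mu)$ extends to a $\Q$-functor left adjoint to the co-restricted Yoneda embedding (type-preservation is automatic; $\Q$-functoriality follows from the universal property in the usual 2-categorical way). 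Conversely, if $L\dashv Y_{\bbA}$ then $\bbA(L\mu,-)=\Phi\bbA(\mu,Y_{\bbA}-)=\bbA\swarrow\mu$, and comparison with \eqref{AM} shows $L\mu$ is a supremum of $\mu$.

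I expect the main obstacle to be the bookkeeping in the chain of equalities giving $\bbA(\sup(\mu),a)=\Phi\bbA(\mu,Y_{\bbA}a)$, since it requires keeping track of types and of how the right extension $\swarrow$ in $\Dist(\Q)$ reduces to the pointwise right extension in $\Q$ via \eqref{AY}. Everything else is either a direct application of Definition \ref{class}(ii) or a reformulation already spelled out in the paragraph preceding Definition \ref{AN}.
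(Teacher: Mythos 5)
Your proof is correct and follows exactly the route the paper intends: the paper gives no detailed argument, asserting the proposition follows ``by the relation between suprema of presheaves and general weighted colimits,'' and your implications $(1)\Leftrightarrow(3)\Leftrightarrow(4)$ spell out precisely the identities $\colim(\mu,F)=\sup(\mu\circ F^{\natural})$ and $\colim(\theta,F)(c)=\sup(\theta(-,c)\circ F^{\natural})$ from Section 2, while $(1)\Leftrightarrow(2)$ mirrors condition (3) of Definition \ref{AN}. Only trivial notational slips occur (e.g.\ writing $(\bbA\swarrow\mu)(t\mu,ta)$ where the second argument should be the object $a$), which do not affect the argument.
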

\begin{definition}\label{cocontinuous}
A $\Q$-functor $F: {\bbA}\to {\bbB}$ between two ${\Q}$-categories is said to be \emph{$\Phi$-cocontinuous} if it preserves all \isar{suprema} of $\Phi$-ideals that exist in ${\bbA}$, that is, for every $\phi\in (\Phi{\bbA})_{0}$, if $\sup(\phi)$ exists, then $\sup(F^{\to}(\phi))$ exists and equals $F(\sup(\phi))$.
\end{definition}
\isar{Again using the relation between suprema of presheaves and general weighted colimits, it follows that}:
\begin{proposition}\label{prop-cocont}\isarnote{Made it a proposition, slightly shorter formulation.}
\isar{For any $\Q$-functor $F: {\bbA}\to {\bbB}$, the following are equivalent:}
\begin{enumerate}[label=(\arabic*),nosep]
\item $F$ is $\Phi$-cocontinuous,
\item $F$ preserves all colimits weighted by $\Phi$-ideals \isar{that} exist in ${\bbA}$,
\item $F$ preserves all colimits weighted by $\Phi$-distributors \isar{that} exist in ${\bbA}$.
\end{enumerate}
\end{proposition}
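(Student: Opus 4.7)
The plan is to reduce all three conditions to statements about suprema of presheaves by exploiting the pointwise description of weighted colimits recalled just before Definition \ref{AN}: for a weight $\mu\in(\P\bbA)_0$ and a $\Q$-functor $G:\bbA\to\bbC$, one has $\colim(\mu,G)=\sup(\mu\circ G^\nat)=\sup(G^{\to}(\mu))$; and for a $\Q$-distributor $\theta:\bbB'\dist\bbC$ and $G:\bbB'\to\bbC'$, one has $\colim(\theta,G)(c)=\sup(G^{\to}(\theta(-,c)))$ for every $c\in\bbC_0$. Combined with the two closure conditions in Definition \ref{class}, this will make the equivalences essentially formal; the only thing to watch is matching Definition \ref{AK}'s notion of preservation to the pointwise supremum formulation of condition (1).

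The implication (3)$\Rightarrow$(2) is immediate: every $\Phi$-ideal $\mu$ on a $\Q$-category $\bbB'$ is (by definition) a $\Q$-distributor $\mu:\bbB'\dist\{q\}$ whose single column $\mu(-,q)=\mu$ lies in $(\Phi\bbB')_0$, hence a $\Phi$-distributor, so its colimits are covered by (3). For (2)$\Rightarrow$(1), given $\phi\in(\Phi\bbA)_0$ with $\sup(\phi)$ existing, I will rewrite $\sup(\phi)=\colim(\phi,1_\bbA)$ and apply (2) with the weight $\phi$ and the functor $1_\bbA$ to obtain $F(\sup(\phi))=\colim(\phi,F\circ 1_\bbA)=\colim(\phi,F)$. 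The pointwise formula then gives $\colim(\phi,F)=\sup(F^{\to}(\phi))$, where $F^{\to}(\phi)\in(\Phi\bbB)_0$ by Definition \ref{class}(ii), so the equation $F(\sup(\phi))=\sup(F^{\to}(\phi))$ of condition (1) follows.

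For (1)$\Rightarrow$(3), let $\theta:\bbB'\dist\bbC$ be a $\Phi$-distributor and $G:\bbB'\to\bbA$ a $\Q$-functor with $\colim(\theta,G)$ existing in $\bbA$. For each $c\in\bbC_0$, the presheaf $\theta(-,c)$ is a $\Phi$-ideal on $\bbB'$, and hence $G^{\to}(\theta(-,c))$ is a $\Phi$-ideal on $\bbA$ by Definition \ref{class}(ii); moreover $\colim(\theta,G)(c)=\sup(G^{\to}(\theta(-,c)))$ exists in $\bbA$. Condition (1) applied to this $\Phi$-ideal yields
\begin{equation*}
F\bigl(\sup(G^{\to}(\theta(-,c)))\bigr)=\sup\bigl(F^{\to}(G^{\to}(\theta(-,c)))\bigr).
\end{equation*}
Using the functoriality $(F\circ G)^{\to}=F^{\to}\circ G^{\to}$ from (\ref{AJ}), the right-hand side equals $\sup((F\circ G)^{\to}(\theta(-,c)))=\colim(\theta,F\circ G)(c)$. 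Since this holds for every $c\in\bbC_0$, we obtain $F\circ\colim(\theta,G)=\colim(\theta,F\circ G)$, which is condition (3).

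The only (mild) subtlety is that condition (1) is stated for $\Phi$-ideals \emph{on $\bbA$ itself}, whereas condition (3) a priori mentions $\Phi$-distributors out of arbitrary $\Q$-categories $\bbB'$; but the pointwise formula shows that one only ever needs to take suprema of the presheaves $G^{\to}(\theta(-,c))$, which live on $\bbA$ and are $\Phi$-ideals thanks to the closure property (ii) in Definition \ref{class}. This is the key bookkeeping step that makes the argument go through with no further hypothesis on $\Phi$.
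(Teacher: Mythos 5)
Your proof is correct, and it follows exactly the route the paper intends: the paper omits the argument as routine ("using the relation between suprema of presheaves and general weighted colimits"), and your cycle (3)$\Rightarrow$(2)$\Rightarrow$(1)$\Rightarrow$(3) just spells out that reduction, using the pointwise formula $\colim(\theta,G)(c)=\sup(G^{\to}(\theta(-,c)))$, the closure property (ii) of Definition \ref{class}, and the functoriality $(F\circ G)^{\to}=F^{\to}\circ G^{\to}$. Nothing further is needed.
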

\isar{Since} the category ${\P}{\bbA}$ of presheaves on ${\bbA}$ is cocomplete\isar{, it certainly} is $\Phi$-cocomplete for any class of presheaves $\Phi$.\isarnote{Moved this sentence.} \isar{However, note} that $\Phi{\bbA}$ is not $\Phi$-cocomplete in general. In order to \isar{``fix'' this, we need a further condition on the class $\Phi$, that we recall first.}
\begin{definition}[\cite{Stubbe2010}]\label{saturated} A class of presheaves $\Phi$ is \emph{saturated} if for each $\mu\in (\Phi{\bbA})_{0}$ and each $G: {\bbA}\to{\P}{\bbB}$ for which each $G(a)\in (\Phi{\bbB})_{0}$, ${\colim}(\mu, G)$ is in $(\Phi{\bbB})_{0}$ too.
\end{definition}
\begin{remark}\label{saturatedbis}
\isar{If a class of presheaves $\Phi$ is saturated, then condition (ii) in Definition \ref{class} follows from condition (i) in Definition \ref{class} and the colimit-condition in Definition \ref{saturated}; and that is how it was put in \cite[Definition 4.1]{Stubbe2010}, where ``non-saturated'' classes were simply not considered. The interesting point is, however, that the saturatedness of a class of presheaves is equivalent to each $\Phi\bbA$ being $\Phi$-cocomplete, as we shall show next.}
\end{remark}
\begin{proposition}\label{prop-saturated}\isarnote{Simplified statement and proof.}
\isar{For a class of presheaves $\Phi$,} the following are equivalent:
\begin{enumerate}[label=(\arabic*),nosep]
\item \isar{for all $\bbA$,} $\Phi{\bbA}$ is $\Phi$-cocomplete and the inclusion $i_\isar{{\bbA}}:\Phi{\bbA}\to {\P}{\bbA}$ is $\Phi$-cocontinuous,
\item $\Phi$ is saturated,
\item the composite of any two $\Phi$-distributors is a $\Phi$-distributor.
\end{enumerate}
In this case, for all $\Psi\in (\Phi(\Phi{\bbA}))_{0}$, $\sup \Psi=\sup(i_{\bbA}^{\to}(\Psi))$.
\end{proposition}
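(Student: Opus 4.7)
The plan is to prove the cycle $(2)\Leftrightarrow(3)$ first, and then $(1)\Leftrightarrow(2)$ (obtaining the final formula en route). The heart of the argument is a single technical observation that translates weighted colimits into presheaf categories into composites of $\Q$-distributors: given any $\Q$-functor $G\:\bbA\to\P\bbB$, the formula $\phi(b,a)=G(a)(b)$ defines a $\Q$-distributor $\phi\:\bbB\dist\bbA$, and $\phi$ is a $\Phi$-distributor if and only if each $G(a)$ is a $\Phi$-ideal. Using the supremum formula $\sup(\nu)=\nu\circ(Y_\bbB)_\nat$ in $\P\bbB$ together with the Yoneda lemma to compute $G^\nat\circ(Y_\bbB)_\nat=\phi$, one obtains the key identity $\colim(\mu,G)=\mu\circ\phi$ in $\P\bbB$, valid for every presheaf $\mu$ on $\bbA$.

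Given this identity, $(2)\Leftrightarrow(3)$ is essentially formal. For $(2)\Rightarrow(3)$: given $\Phi$-distributors $\phi\:\bbA\dist\bbB$ and $\psi\:\bbB\dist\mathbb{C}$, the recipe $b\mapsto\phi(-,b)$ yields a $\Q$-functor $G\:\bbB\to\P\bbA$ whose values are $\Phi$-ideals, and $(\psi\circ\phi)(-,c)=\psi(-,c)\circ\phi=\colim(\psi(-,c),G)$ is a $\Phi$-ideal by saturatedness. For $(3)\Rightarrow(2)$: given $\mu\in(\Phi\bbA)_0$ and $G\:\bbA\to\P\bbB$ with values in $\Phi\bbB$, the associated $\phi$ is a $\Phi$-distributor, hence so is the composite $\mu\circ\phi=\colim(\mu,G)$, placing the colimit in $(\Phi\bbB)_0$.

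For $(2)\Rightarrow(1)$, apply saturatedness to $\Psi\in(\Phi(\Phi\bbA))_0$ and the inclusion $i_\bbA\:\Phi\bbA\to\P\bbA$, whose values trivially lie in $\Phi\bbA$: this yields $M=\colim(\Psi,i_\bbA)\in(\Phi\bbA)_0$, and the key identity identifies this $M$ with $\sup(i_\bbA^\to(\Psi))\in(\P\bbA)_0$. The defining property $M_\nat=(i_\bbA)_\nat\swarrow\Psi$, combined with $(i_\bbA)_\nat(\mu,\lambda)=\P\bbA(\mu,\lambda)=\Phi\bbA(\mu,\lambda)$ for all $\mu,\lambda\in(\Phi\bbA)_0$ (using full faithfulness of $i_\bbA$), reduces to $\Phi\bbA(M,\lambda)=(\Phi\bbA\swarrow\Psi)(t\Psi,\lambda)$, which is exactly what characterizes $\sup\Psi$ in $\Phi\bbA$. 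This simultaneously establishes $\Phi$-cocompleteness of $\Phi\bbA$, $\Phi$-cocontinuity of $i_\bbA$, and the displayed formula $\sup\Psi=\sup(i_\bbA^\to(\Psi))$.

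Finally, for $(1)\Rightarrow(2)$: given $\mu\in(\Phi\bbA)_0$ and $G\:\bbA\to\P\bbB$ with values in $\Phi\bbB$, factor $G=i_\bbB\circ G'$ with $G'\:\bbA\to\Phi\bbB$ (possible since $i_\bbB$ is fully faithful). Condition (ii) of Definition~\ref{class} applied to $G'$ gives $\mu\circ(G')^\nat\in(\Phi(\Phi\bbB))_0$; by $\Phi$-cocompleteness of $\Phi\bbB$ the colimit $\colim(\mu,G')$ exists in $\Phi\bbB$; and by $\Phi$-cocontinuity of $i_\bbB$ (invoking Proposition~\ref{prop-cocont} to pass from preservation of suprema to preservation of $\Phi$-weighted colimits) one concludes $\colim(\mu,G)=i_\bbB(\colim(\mu,G'))\in(\Phi\bbB)_0$. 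The main obstacle throughout is the verification of the key identity $\colim(\mu,G)=\mu\circ\phi$; it is not deep but must be done with care because of the multiple types involved and the bookkeeping of adjoint-related $\Q$-distributors around the Yoneda embedding.
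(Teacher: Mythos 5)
Your proposal is correct, and where it overlaps with the paper's own argument it coincides with it: your $(1\Rightarrow 2)$ is exactly the paper's proof (factor $G=i_{\bbB}\circ G'$, use $\Phi$-cocompleteness of $\Phi\bbB$ and $\Phi$-cocontinuity of $i_{\bbB}$ via Proposition \ref{prop-cocont}; your explicit appeal to condition (ii) of Definition \ref{class} to see that $\mu\circ(G')^{\natural}\in(\Phi(\Phi\bbB))_{0}$ is a detail the paper leaves implicit), and your derivation of $\sup\Psi=\sup(i_{\bbA}^{\to}(\Psi))$ amounts to the paper's comparison of $\colim(\Psi,1_{\Phi\bbA})$ with $\colim(\Psi,i_{\bbA})$. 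The genuine difference is in $(2\Rightarrow 1)$ and $(2\Leftrightarrow 3)$: the paper does not prove these at all but cites \cite[Propositions 4.2 and 4.4]{Stubbe2010}, whereas you prove them from scratch using the classifying correspondence between $\Q$-functors $G\:\bbA\to\P\bbB$ and $\Q$-distributors $\phi\:\bbB\dist\bbA$ with $\phi(b,a)=G(a)(b)$, together with the key identity $\colim(\mu,G)=\mu\circ G^{\natural}\circ(Y_{\bbB})_{\natural}=\mu\circ\phi$ (which follows from Equation \ref{AO} and the Yoneda Lemma, as you indicate). Both the identity and your use of it check out, including the reduction of the colimit equation $M_{\natural}=(i_{\bbA})_{\natural}\swarrow\Psi$ to the defining property $\Phi\bbA(M,-)=\Phi\bbA\swarrow\Psi$ of $\sup\Psi$ inside the full subcategory $\Phi\bbA$, which indeed gives cocompleteness of $\Phi\bbA$, cocontinuity of $i_{\bbA}$ and the displayed formula in one stroke. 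What your route buys is a self-contained proof independent of \cite{Stubbe2010}; what the paper's buys is brevity, since the computations you carry out are essentially those underlying the cited results.
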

\begin{proof}
(1$\Rightarrow$2) \isar{Suppose that $\mu:{\bbA}\dist\{q\}$ is in $(\Phi\bbA)_0$ and that $G:{\bbA}\to{\P}{\bbB}$ is a $\Q$-functor for which each $G(a)$ is in $(\Phi\bbA)_0$. Then $G$ is the composite of its co-restriction $G':\bbA\to\Phi\bbB$ and the inclusion $i_{\bbB}:\Phi\bbB\to\P\bbB$. By assumption, $\colim(\mu,G')$ exists in $\Phi\bbB$ and is preserved by $i_{\bbB}$, that is, $i_{\bbB}(\colim(\mu,G'))=\colim(\mu,i_{\bbB}\circ G')=\colim(\mu,G)$. Therefore the latter is in $(\Phi\bbB)_0$, as wanted.}\\
(2$\Rightarrow$1) \isar{This is in \cite[Proposition 4.4]{Stubbe2010}.}\\
($2\Leftrightarrow 3$) \isar{This is in \cite[Proposition 4.2]{Stubbe2010}.}\\
\isar{Assuming now the (equivalent) conditions in the statement, for any $\Psi\in(\Phi(\Phi\bbA))_0$ both $\colim(\Psi,1_{\Phi\bbA})=\sup(\Psi)$ and $\colim(\Psi,i_{\bbA})=\colim(i_{\bbA}^{\to}(\Psi),1_{\P\bbA})=\sup(i_{\bbA}^{\to}(\Psi))$ exist, and $\colim(\Psi,1_{\Phi\bbA})=i_{\bbA}(\colim(\Psi,1_{\Phi\bbA}))=\colim(\Psi,i_{\bbA})$. This explains the stated formula.}
\end{proof}
\begin{remark}\label{BB}
(1) If ${\Q}$ is a commutative unital quantale, then the concept of a saturated class of presheaves agrees with that of \emph{a saturated class of weights} in quantale-enriched categories given in \cite{LaiZhang2007}. Thus, a saturated class of presheaves on $\Cat(\Q)$ can be seen as extension of a saturated class of weights on quantale-enriched categories in a many-typed setting.

(2) \isarnote{Earlier in the paper I removed the restriction to "skeletal" cats; but here I added a remark to the effect that preorder/partial order makes no big difference.}Following the pioneer work of Wright, Wagner and Thatcher \cite{Wright1978} on so-called subset systems $Z$, order-theoretical, topological and categorical properties of posets have been extended to the general $Z$-setting \cite{Bandelt1983,Erne,LiuZhao2013}. Recall that {\bf 2} is the two-element Boolean algebra, $\Cat({\bf 2})$ is the category \isar{of preordered sets} and order-preserving maps. Thus, if $\Phi$ is a saturated class of presheaves on $\Cat({\bf 2})$, then it is a union-complete subset system in the sense of \cite{Wright1978}, \isar{albeit on preorders and with all $\Phi$-ideals being lower sets. Strictly speaking, union-complete subset systems $Z$ were only defined for posets, and $Z$-sets need not be a lower sets. However, this has} little or no impact on developing the theory of completeness and continuity of posets/preorders in a general setting. Thus, the concept of a saturated class of presheaves on $\Cat(\Q)$ can be viewed as a generalization of that of a union-complete subset system on {\bf Poset} in a many-typed and many-valued setting.

(3)\isarnote{Reformulated and shortened the end of this section, and put it in the Remark.}
\isar{In \cite{Stubbe2010}, Stubbe studied more categorical properties of $\Phi$-cocomplete ${\Q}$-categories and $\Phi$-cocontinuous $\Q$-functors. In particular, \cite[Theorem 4.6]{Stubbe2010}, contains the fact that there is an essentially bijective correspondence between saturated classes $\Phi$ of presheaves on ${\Q}$-categories on the one had, and so-called \emph{full sub-KZ-doctrines} $(\mathcal{T}, \varepsilon)$ of the free cocompletion KZ-doctrine ${\P}:\Cat(\Q)\to\Cat(\Q)$ on the other. Particularly, any saturated class of presheaves $\Phi$ determines the KZ-doctrine (a special 2-monad)
\begin{equation}\label{BC}
\Phi: \Cat(\Q)\to \Cat(\Q): (F: {\bbA}\to {\bbB})\mapsto (F^{\to}: \Phi{\bbA}\to \Phi{\bbB})
\end{equation}
whose image is precisely the category of $\Phi$-cocomplete $\Q$-categories and $\Phi$-cocontinuous functors. We refer to \cite{Stubbe2010} for details.}

(4) \isarnote{Added this in the remark.} \isar{The minimal class of
presheaves $\Phi_{min}$ is that which contains only the
representable presheaves; it is saturated, and
$\Phi_{min}\bbA=Y_{\bbA}(\bbA)$. The maximal class is that which
contains all presheaves; it is also saturated, and
$\Phi_{max}\bbA=\P\bbA$. In the next section we shall study more
interesting examples.}
\end{remark}

\section{Examples of saturated classes of presheaves}\label{S4}

With the rapid development of quantitative domain theory, ideals in quantale-enriched categories and in $L$-ordered sets are studied and there are some elegant results \cite{LaiZhang2007,LiuZhao2014,Laizhang2020,Yao2010,ZhangFan2005}. Recently, Lai and Zhang gave a comparative study of three kinds of ideals in fuzzy ordered sets \cite{Laizhang2020}, \isar{each of generalizing well-known posetal definitions.}

\isar{Let us first recall the posetal situation. For a non-empty lower set $I$ of a poset $P$,} the following are equivalent:
\begin{enumerate}[label=(\roman*),nosep]

\item $I$ is an \emph{ideal}\isar{:} for every $x, y\in I$, there is some $z\in I$ such that $x, y\leq z$;

\item $I$ is \emph{irreducible}\isar{:} for lower sets $B,C$ of $P$, $I\subseteq B\cup C$ implies that $I\subseteq B$ or $I\subseteq C$;

\item $I$ is \emph{flat}\isar{:} for upper sets $G,H$ of $P$, if $I$ intersects with $G$ and \isar{with} $H$, then $I$ intersects with $G\cap H$.
\end{enumerate}
\isar{The above characterizations of ideals in posets have been generalized to quantale-enriched categories (or $L$-ordered sets) in \cite{LaiZhang2007,Laizhang2020,LiuZhao2014,Yao2010}, where they are no longer equivalent, as pointed out in $\cite{Laizhang2020}$. In what follows, we shall introduce saturated classes of presheaves that further generalize the above definitions to the `many-valued, many-typed' setting of quantaloid-enriched categories.}

Directed sets and ideals in domain theory are usually assumed to be non-empty $\cite{Gie2003}$. Based \isar{on the} definition of inhabited fuzzy sets in \cite{Laizhang2020}, we shall give \isar{a} corresponding concept \isar{for} ideals in ${\Q}$-categories.

\begin{definition}\label{inhabited}\isarnote{Made a definition and reformulated slightly.}
A presheaf $\mu:\bbA\dist\{q\}$ is \emph{inhabited} if
\begin{equation}\label{DA}
1_{q}\leq \bigvee_{a\in {\bbA}_{0}, ta=q}\mu(a)
\end{equation}
A copresheaf $\lambda:\{p\}\dist\bbA$ is \emph{inhabited} if
\begin{equation}\label{DB}
 1_{p}\leq \bigvee_{a\in {\bbA}_{0}, ta=p}\lambda(a).
\end{equation}
\end{definition}
\isar{Next we recall that $\Dist(\Q)$ is a quantaloid, so we can compute suprema (and hence also infima) of parallel distributors. This applies in particular to parallel (co- or contravariant) presheaves, and we use this in the statements below.}
\begin{definition}\label{ideals}\isarnote{Made a single definition and reformulated slightly}
A presheaf $\phi\in({\P}{\bbA})_{0}$ is:
\begin{enumerate}[nosep]
\item an \emph{irreducible ideal} if
\begin{equation}\label{DC}
{\P}{\bbA}(\phi, \phi_{1}\vee \phi_{2})={\P}{\bbA}(\phi, \phi_{1})\vee{\P}{\bbA}(\phi, \phi_{2})
\end{equation}
holds for all $\phi_{1}, \phi_{2}\in ({\P}{\bbA})_{0}$ with the same type;
\item a \emph{flat ideal} if
\begin{equation}\label{DD}
\phi\circ (\lambda_{1}\wedge \lambda_{2})=\phi\circ \lambda_{1}\wedge\phi\circ \lambda_{2}
\end{equation}
holds for all $\lambda_{1}, \lambda_{2}\in ({\P}^{\dag}{\bbA})_{0}$ with the same type;
\item a \emph{weakly flat ideal} if
\begin{equation}\label{DE}
\phi\circ (\lambda_{1}\wedge \lambda_{2})=\phi\circ \lambda_{1}\wedge\phi\circ \lambda_{2}
\end{equation}
holds for all inhabited copresheaves $\lambda_{1}, \lambda_{2}\in ({\P}^{\dag}{\bbA})_{0}$ with the same type.
\end{enumerate}
\end{definition}

\begin{remark}\label{CA}
(1) In \cite{Tao2014}, Tao, Lai and Zhang investigated flat weights in quantaloid-enriched categories. Later, Lai studied irreducible ideals and flat ideals in quantale-enriched categories in \cite{Laizhang2020}. Note that, in \cite{Laizhang2020}, irreducible ideals and flat ideals are supposed to be inhabited. Flat ideals in the sense of this paper are called weakly flat weights in \cite{Tao2014}.

(2) Following the definition of an ideal given in \cite{LaiZhang2007}, order-theoritical and categorical properties of quantale-enriched categories are studied in a series of papers \cite{LiuZhao2014,Yao2010}. By Propositions 3.9 and 3.11 in \cite{Tao2014}, we know that if ${\Q}$ is a frame then $\phi$ is an inhabited flat ideal if and only if $\phi$ is an ideal in the sense of \cite{LaiZhang2007}.

(3) More particularly, suppose that ${\Q}$ is the two-element Boolean algebra {\bf 2} \isar{and that} ${\bbA}$ is a \isar{skeletal} {\bf 2}-category. By the fact that ${\P}{\bbA}$ is the set of all lower sets of ${\bbA}$ with inclusion order, we know that the following are equivalent:
\begin{enumerate}[label=(\roman*),nosep]

\item $\phi$ is an ideal in the poset ${\bbA}_{0}$;

\item $\phi$ is an inhabited irreducible ideal in the sense of \isar{Definitions \ref{inhabited} and \ref{ideals}};

\item $\phi$ is an inhabited flat ideal in the sense of \isar{Definitions \ref{inhabited} and \ref{ideals}}.
\end{enumerate}
\end{remark}
\begin{theorem}\label{CB}
\isar{Let $\mathcal{I}$ (resp., $\mathcal{I}_{in}$) be the class of all irreducible ideals (resp., inhabited irreducible ideals). These are saturated classes of presheaves.}
\end{theorem}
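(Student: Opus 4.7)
The plan is to verify, for each $\Phi\in\{\mathcal{I},\mathcal{I}_{in}\}$, condition (i) of Definition \ref{class} (representables lie in $\Phi\bbA$) together with the colimit-closure of Definition \ref{saturated}. By Remark \ref{saturatedbis}, this would suffice to show that $\Phi$ is a class of presheaves and is moreover saturated.

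That every representable presheaf is an inhabited irreducible ideal should be immediate from Yoneda: it gives
\[
\P\bbA(Y_\bbA x,\phi_1\vee\phi_2)=(\phi_1\vee\phi_2)(x)=\phi_1(x)\vee\phi_2(x)=\P\bbA(Y_\bbA x,\phi_1)\vee\P\bbA(Y_\bbA x,\phi_2),
\]
while $Y_\bbA x(x)=\bbA(x,x)\geq 1_{tx}$ certifies inhabitedness.

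For the colimit-closure, I would fix $\mu\in(\Phi\bbA)_0$ and $G\:\bbA\to\P\bbB$ with each $Ga\in(\Phi\bbB)_0$, and set $\nu:=\colim(\mu,G)\in\P\bbB$. First I plan to extract a hom-description of $\nu$: combining $\nu_\nat=G_\nat\swarrow\mu$ with \eqref{AY} and $G_\nat(a,\psi)=\P\bbB(Ga,\psi)$, I expect to obtain
\begin{equation*}
\P\bbB(\nu,\psi)=\bigwedge_{a\in\bbA_0}\P\bbB(Ga,\psi)\swarrow\mu(a)
\end{equation*}
for every $\psi\in\P\bbB$. Given $\psi_1,\psi_2\in\P\bbB$ of a common type $r$, I would then introduce auxiliary presheaves $\mu_i\:\bbA\dist\{r\}$ defined by $\mu_i(a):=\P\bbB(Ga,\psi_i)=\psi_i\swarrow Ga$; the distributor axiom for $\mu_i$ should follow from $\bbA(a,a')\leq\P\bbB(Ga,Ga')$ together with the general inequality $(\psi_i\swarrow Ga')\circ(Ga'\swarrow Ga)\leq\psi_i\swarrow Ga$ valid in any quantaloid. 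Irreducibility of each $Ga$ will give $(\mu_1\vee\mu_2)(a)=\P\bbB(Ga,\psi_1\vee\psi_2)$; the displayed hom-formula will then read as $\P\bbB(\nu,\psi_i)=\P\bbA(\mu,\mu_i)$ and $\P\bbB(\nu,\psi_1\vee\psi_2)=\P\bbA(\mu,\mu_1\vee\mu_2)$, so the irreducibility of the weight $\mu$ transfers directly to $\nu$.

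For the inhabited case, applying Yoneda once more should yield the pointwise formula $\nu(b)=\bigvee_{a\in\bbA_0}\mu(a)\circ Ga(b)$, and then
\begin{equation*}
\bigvee_{b\in\bbB_0,\,tb=q}\nu(b)\geq\bigvee_{a\in\bbA_0,\,ta=q}\mu(a)\circ\bigvee_{b\in\bbB_0,\,tb=ta}Ga(b)\geq\bigvee_{a\in\bbA_0,\,ta=q}\mu(a)\geq 1_q,
\end{equation*}
using inhabitedness of each $Ga$ (for the second inequality) and of $\mu$ (for the third). The main obstacle will be correctly extracting these two colimit formulas and verifying the distributor axiom for the auxiliary presheaves $\mu_i$; once that bookkeeping is done, the irreducibility and inhabitedness of $\nu$ follow by short direct computations.
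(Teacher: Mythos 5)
Your proposal is correct and follows essentially the same route as the paper: both arguments reduce to the identity $\P\bbB(\colim(\mu,G),\psi)=\P\bbA(\mu,\P\bbB(G-,\psi))$, apply irreducibility of each $G(a)$ and then of $\mu$, and conclude inhabitedness by the same chain of inequalities from the pointwise formula for $\colim(\mu,G)$. The only (cosmetic) difference is that you derive the key identity from $\colim(\mu,G)_{\natural}=G_{\natural}\swarrow\mu$ together with Equation \ref{AY}, whereas the paper obtains it through the adjunctions $\sup\dashv Y_{\P\bbB}$ and $G^{\to}\dashv G^{\leftarrow}$.
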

\begin{proof}
\isar{Clearly, both classes contain all representable presheaves. Next,} let ${\phi: {\bbA}\dist \{q\}}$ be an irreducible ideal and $G: {\bbA}\to{\P}{\bbB}$ a $\Q$-functor such that each $G(a)$ is irreducible. For $\phi_{1}, \phi_{2}\in ({\P}{\bbB})_{0}$ with same type, we have \begin{align} &{\P}{\bbB}({\colim}(\phi, G), \phi_{1}\vee\phi_{2})\\ &={\P}{\bbB}(\sup(\phi\circ G^{\natural}), \phi_{1}\vee\phi_{2})\\ &={\P}{\P}{\bbB}(\phi\circ G^{\natural}, Y_{{\P}{\bbB}}(\phi_{1}\vee\phi_{2}))&(\sup\dashv Y_{{\P}{\bbB}})\\ &={\P}{\bbA}(\phi, Y_{{\P}{\bbB}}(\phi_{1}\vee\phi_{2})\circ G_{\natural})&(G^{\to}\dashv G^{\leftarrow})\\ &={\P}{\bbA}(\phi, {\P}{\bbB}(G-, \phi_{1}\vee\phi_{2}))\\ &={\P}{\bbA}(\phi, {\P}{\bbB}(G-, \phi_{1})\vee{\P}{\bbB}(G-, \phi_{2}))&(\mathrm{each}\ G(a)\ \mathrm{is\ irreducible})\\ &={\P}{\bbA}(\phi, {\P}{\bbB}(G-, \phi_{1}))\vee{\P}{\bbA}(\phi, {\P}{\bbB}(G-, \phi_{2}))&(\phi\ \mathrm{is\ irreducible})\\ &={\P}{\bbA}(\phi, Y_{{\P}{\bbB}}(\phi_{1})\circ G_{\natural})\vee{\P}{\bbA}(\phi, Y_{{\P}{\bbB}}(\phi_{2})\circ G_{\natural})\\ &={\P}{\P}{\bbB}(\phi\circ G^{\natural}, Y_{{\P}{\bbB}}(\phi_{1}))\vee{\P}{\P}{\bbB}(\phi\circ G^{\natural}, Y_{{\P}{\bbB}}(\phi_{2}))\\ &={\P}{\bbB}({\colim}(\phi, G), \phi_{1})\vee {\P}{\bbB}({\colim}(\phi, G), \phi_{2}). \end{align} This suffices to conclude that $\mathcal{I}$ is a saturated class of presheaves (see also Remark \ref{saturatedbis}). In addition, suppose that $\phi$ and each $G(a)$ are inhabited. We have \begin{align} \bigvee_{b\in{\bbB}_{0},tb=q}{\colim}(\phi, G)(b)&=\bigvee_{b\in{\bbB}_{0},tb=q}(\phi\circ G^{\natural}\circ (Y_{\bbB})_{\natural})(b)\\ &=\bigvee_{b\in{\bbB}_{0},tb=q}\bigvee_{a\in{\bbA}_{0}}\bigvee_{\psi\in({\P}{\bbB})_{0}}\phi(a)\circ{\P}{\bbB}(\psi, G(a))\circ {\P}{\bbB}(Y_{\bbB}(b),\psi)\\ &\geq\bigvee_{b\in{\bbB}_{0},tb=q}\bigvee_{a\in{\bbA}_{0}}\phi(a)\circ{\P}{\bbB}(Y_{\bbB}(b), G(a))\\ &\geq\bigvee_{a\in{\bbA}_{0}, ta=q}\left(\phi(a)\circ\bigvee_{b\in{\bbB}_{0},tb=q}G(a)(b)\right)\\ &\geq\bigvee_{a\in{\bbA}_{0}, ta=q}(\phi(a)\circ 1_{q})\\ &=\bigvee_{a\in{\bbA}_{0}, ta=q}\phi(a)\\ &\geq1_{q}. \end{align} Thus, $\mathcal{I}_{in}$ is a saturated class of presheaves.
\end{proof}

\begin{theorem}\label{CC}
\isar{Let $\mathcal{F}$ (resp., $\mathcal{F}_{in}$, $\mathcal{WF}$) be the class of flat ideals (resp., inhabited flat ideals, weakly flat ideals). These are saturated classes of presheaves.}
\end{theorem}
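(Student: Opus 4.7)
The plan is to mirror the proof of Theorem~\ref{CB} in its ``opposite'' aspect: where the irreducible case exploits how $\phi$ interacts with binary joins of presheaves via the right-hand hom ${\P}{\bbB}(-,-)$, the (weakly) flat case exploits how $\phi$ interacts with binary meets of copresheaves via left composition $\phi\circ-$. The common input is the identity
\begin{equation*}
\colim(\phi,G)=\sup(\phi\circ G^{\natural})=\phi\circ G^{\natural}\circ (Y_{\bbB})_{\natural}
\end{equation*}
in ${\P}{\bbB}$, combined with the closure of $\Dist(\Q)$, which lets us freely compose with any copresheaf $\lambda:\{p\}\dist{\bbB}$ on the left.

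For the class $\mathcal{F}$, a short application of the Yoneda Lemma gives
$\colim(\phi,G)\circ \lambda = \phi\circ \eta_{\lambda}$,
where $\eta_\lambda\in ({\P}^{\dag}{\bbA})_{0}$ is the copresheaf determined by $\eta_\lambda(a)=G(a)\circ \lambda$. Given copresheaves $\lambda_1,\lambda_2$ of the same type, flatness of each $G(a)$ yields $\eta_{\lambda_1\wedge\lambda_2}=\eta_{\lambda_1}\wedge \eta_{\lambda_2}$ pointwise in ${\P}^{\dag}{\bbA}$, and then flatness of $\phi$ delivers
\begin{equation*}
\colim(\phi,G)\circ (\lambda_1\wedge\lambda_2)=\phi\circ(\eta_{\lambda_1}\wedge\eta_{\lambda_2})=\phi\circ \eta_{\lambda_1}\wedge \phi\circ \eta_{\lambda_2}=\colim(\phi,G)\circ \lambda_1 \wedge \colim(\phi,G)\circ \lambda_2,
\end{equation*}
which establishes saturatedness of $\mathcal{F}$. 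Condition~(i) of Definition~\ref{class} is easy here: since $Y_{\bbA}(x)=x^{\natural}$ and $x_{\natural}\dashv x^{\natural}$ in $\Dist(\Q)$, the map $x^{\natural}\circ -$ is a right adjoint and hence preserves all meets of copresheaves, so every representable presheaf is flat.

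For $\mathcal{F}_{in}$, the inhabitedness computation from the end of Theorem~\ref{CB}'s proof carries over verbatim --- it only uses the formula for $\colim(\phi,G)(b)$ and the inhabitedness of $\phi$ and each $G(a)$ --- and representable presheaves are always inhabited because ${\bbA}(x,x)\geq 1_{tx}$. For $\mathcal{WF}$ the same blueprint applies with ``flat'' replaced by ``weakly flat'': weak flatness of each $G(a)$ on inhabited $\lambda_1,\lambda_2$ still gives $\eta_{\lambda_1\wedge\lambda_2}=\eta_{\lambda_1}\wedge\eta_{\lambda_2}$, after which weak flatness of $\phi$ is to be invoked on $\eta_{\lambda_1},\eta_{\lambda_2}$.

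I expect the main obstacle in the $\mathcal{WF}$ case to be verifying that $\eta_{\lambda_1},\eta_{\lambda_2}$ are themselves inhabited copresheaves on ${\bbA}$, so that weak flatness of $\phi$ actually applies. Since neither $\phi$ nor the $G(a)$'s are assumed inhabited here, the argument must extract enough ``mass'' from $\lambda$ alone to secure $\bigvee_{a\in{\bbA}_{0},\,ta=p}G(a)\circ \lambda\geq 1_p$. My first attack would be to factor $\eta_\lambda=G^{\natural}\circ ((Y_{\bbB})_{\natural}\circ \lambda)$ and to observe that $(Y_{\bbB})_{\natural}\circ \lambda$ is inhabited as a copresheaf on ${\P}{\bbB}$ (because $(Y_{\bbB}b)\circ \lambda\geq \lambda(b)$ by Yoneda, so summing over $b$ with $tb=p$ recovers $\bigvee_{b\in{\bbB}_{0},\,tb=p}\lambda(b)\geq 1_p$), and then to exploit a transport property of $G^{\natural}$ to push this inhabitedness down to ${\bbA}$; as a fallback, one could decompose $\eta_\lambda$ as a supremum of inhabited sub-copresheaves on which weak flatness of $\phi$ does apply, and then pass to the supremum.
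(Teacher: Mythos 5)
Your handling of $\mathcal{F}$ and $\mathcal{F}_{in}$ is exactly the paper's argument, just repackaged: the paper rewrites $G^{\natural}\circ(Y_{\bbB})_{\natural}$ as ${\P}{\bbB}(Y_{\bbB}-,G-)$, so that $\colim(\phi,G)\circ\lambda=\phi\circ\eta_{\lambda}$ with $\eta_{\lambda}(a)=G(a)\circ\lambda$, and then applies flatness of the $G(a)$'s and of $\phi$ in the same two steps; the inhabitedness count for $\mathcal{F}_{in}$ is indeed imported verbatim from Theorem \ref{CB}, and your remark that $x_{\natural}\dashv x^{\natural}$ makes $x^{\natural}\circ-$ meet-preserving is a clean justification of the claim (left unargued in the paper) that representables are flat.

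For $\mathcal{WF}$, however, your proof is incomplete, and the obstacle you isolate is fatal to this route: inhabitedness of $\lambda$ only bounds the join $\bigvee_{tb=p}\lambda(b)$, not any individual value $\lambda(b)$, so $\eta_{\lambda}$ need not be inhabited, nor need it lie above any inhabited copresheaf --- hence neither your ``transport along $G^{\natural}$'' attack nor your fallback decomposition can work. Concretely, let $\Q$ be the one-object quantaloid given by the quantale of subsets of the two-element group $\{e,g\}$ (complex multiplication, unit $\{e\}$), let $\bbA$ have a single object $*$ with $\bbA(*,*)=\{e\}$ and $\phi(*)=\{e,g\}$: then $\phi$ is weakly flat (every inhabited copresheaf contains $e$, so all relevant meets are nonempty) but not flat. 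Take $F\:\bbA\to\bbB$ with $F(*)=b_{1}$, where $\bbB$ is discrete with objects $b_{1},b_{2}$ (diagonal homs $\{e\}$, off-diagonal $\emptyset$). Then $\theta:=\phi\circ F^{\natural}=\colim(\phi,Y_{\bbB}\circ F)$ has $\theta(b_{1})=\{e,g\}$ and $\theta(b_{2})=\emptyset$, and for the inhabited copresheaves $\lambda_{1}(b_{1})=\lambda_{1}(b_{2})=\{e\}$ and $\lambda_{2}(b_{1})=\{g\}$, $\lambda_{2}(b_{2})=\{e\}$ one gets $\theta\circ(\lambda_{1}\wedge\lambda_{2})=\emptyset$ while $\theta\circ\lambda_{1}\wedge\theta\circ\lambda_{2}=\{e,g\}$; here $\eta_{\lambda_{2}}(*)=\lambda_{2}(b_{1})=\{g\}$ is not inhabited and dominates no inhabited copresheaf. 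So even condition (ii) of Definition \ref{class} fails for $\mathcal{WF}$ over this $\Q$. Note that the paper's displayed chain only covers the $\mathcal{F}$ case and asserts the $\mathcal{WF}$ conclusion without justifying precisely the step you worry about, so you have put your finger on a genuine issue: the $\mathcal{WF}$ part cannot be obtained along these lines (nor, it seems, at all) without extra hypotheses on $\Q$ or a strengthened notion of weak flatness.
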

\begin{proof}
\isar{Clearly, these classes contain all representable presheaves. Next, }let ${\phi: {\bbA}\dist \{q\}}$ be a flat ideal and $G: {\bbA}\to{\P}{\bbB}$ a $\Q$-functor such that each $G(a)$ is flat. For $\lambda_{1}, \lambda_{2}\in ({\P}^{\dag}{\bbB})_{0}$ with same type, we have \begin{align} {\colim}(\phi, G)\circ (\lambda_{1}\wedge\lambda_{2})&=\phi\circ G^{\natural}\circ (Y_{\bbB})_{\natural}\circ(\lambda_{1}\wedge\lambda_{2})\\ &=\phi\circ{\P}{\bbB}(Y_{\bbB}-, G-)\circ(\lambda_{1}\wedge\lambda_{2})\\ &=\phi\circ({\P}{\bbB}(Y_{\bbB}-, G-)\circ\lambda_{1}\wedge {\bbA}(Y_{\bbB}-, G-)\circ\lambda_{2})\\ &=\phi\circ{\P}{\bbB}(Y_{\bbB}-, G-)\circ\lambda_{1}\wedge \phi\circ{\bbA}(Y_{\bbB}-, G-)\circ\lambda_{2}\\ &=\phi\circ G^{\natural}\circ (Y_{\bbB})_{\natural}\circ\lambda_{1}\wedge \phi\circ G^{\natural}\circ (Y_{\bbB})_{\natural}\circ\lambda_{2}\\ &={\colim}(\phi, G)\circ \lambda_{1}\wedge {\colim}(\phi, G)\circ \lambda_{2}. \end{align} So, we can conclude that $\mathcal{F}$ and $\mathcal{WF}$ are saturated classes of presheaves. By similar arguments given in Theorem 4.4, we obtain that $\mathcal{F}_{in}$ is a saturated class of presheaves.
\end{proof}

\isar{Finally,} based on the class of \emph{conical presheaves} studied in \cite[Section 5]{Stubbe2010}, we shall introduce the third kind of ideals in ${\Q}$-enriched categories. \isar{Below we shall write $(\bbA_q,\leq_q)$ for the preordered set of objects of type $q$ in ${\bbA}$.}
\begin{definition}\label{CD}
A presheaf ${\phi: {\bbA}\dist \{q\}}$ is a
\begin{enumerate}[nosep,leftmargin=*,label={-}]
\item \emph{conical presheaf} if there exists a set $\{a_{i}\}_{i\in I}$ in $({\bbA}_{q}, \leq_{q})$ such that $\phi=\bigvee_{i\in I}{\bbA}(-, a_{i})$;
\item \emph{conical ideal} if there exists a directed set $\{a_{i}\}_{i\in I}$ in $({\bbA}_{q}, \leq_{q})$ such that $\phi=\bigvee_{i\in I}{\bbA}(-, a_{i})$.
\end{enumerate}
\end{definition}
\isar{Since every directed set is nonempty, every conical ideal is inhabited.}
\begin{theorem}\label{CE}
Let $\mathcal{C}$ (resp.\ $\mathcal{C}^{D}$) be the class of conical presheaves (resp.\ conical ideals). These are saturated class of presheaves.
\end{theorem}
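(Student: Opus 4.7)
My plan is to verify the two conditions characterizing saturated classes of presheaves: (i) every representable presheaf belongs to the class, and (ii) the colimit-closure property of Definition \ref{saturated}. Condition (i) is immediate in both cases, as any representable $Y_{\bbA}(a) = \bigvee_{i \in \{*\}} Y_{\bbA}(a)$ is a join over a singleton (which is trivially directed in $(\bbA_{ta}, \leq_{ta})$).

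For (ii), let $\phi = \bigvee_{i \in I} Y_{\bbA}(a_i) \in (\mathcal{C}\bbA)_0$ (respectively $(\mathcal{C}^D\bbA)_0$, in which case $\{a_i\}$ is additionally directed in $(\bbA_q, \leq_q)$), and let $G: \bbA \to \P\bbB$ be a $\Q$-functor with each $G(a_i) = \bigvee_{j \in J_i} Y_{\bbB}(b_{i,j}) \in (\mathcal{C}\bbB)_0$ (respectively $(\mathcal{C}^D\bbB)_0$, with $\{b_{i,j}\}_j$ additionally directed in $(\bbB_q, \leq_q)$). The core identity
\[
\colim(\phi, G) = \bigvee_{i \in I} G(a_i) = \bigvee_{(i,j)} Y_{\bbB}(b_{i,j})
\]
holds because $\colim(-, G): \P\bbA \to \P\bbB$ factors as $\sup_{\P\bbB} \circ G^{\to}$ (a composite of left adjoints) and hence preserves suprema, while $\colim(Y_{\bbA}(a), G) = G(a)$ is a standard Yoneda-type identity. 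This expresses $\colim(\phi, G)$ as a join of representables in $\bbB_q$; hence $\colim(\phi, G) \in (\mathcal{C}\bbB)_0$, proving saturatedness of $\mathcal{C}$.

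For $\mathcal{C}^D$, one additionally needs a directed generating set for $\colim(\phi, G)$. The natural candidate is $\{b_{i,j}\}_{(i,j)} \subseteq \bbB_q$. Given any $(i, j), (i', j')$, directedness of $\{a_i\}$ yields $i'' \in I$ with $a_i, a_{i'} \leq_q a_{i''}$, so by $\Q$-functoriality of $G$, $G(a_i), G(a_{i'}) \leq G(a_{i''}) = \bigvee_{j''} Y_{\bbB}(b_{i'',j''})$ in $(\P\bbB)_q$. The main obstacle is to extract from the pointwise inequality $Y_{\bbB}(b_{i,j}) \leq \bigvee_{j''} Y_{\bbB}(b_{i'',j''})$ a single $j'' \in J_{i''}$ satisfying $b_{i,j} \leq_q b_{i'',j''}$ (and likewise for $b_{i',j'}$). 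This extraction is immediate in the posetal case $\Q = \mathbf{2}$ (Remark \ref{BB}(2)), where it reduces to ordinary set containment, but in the general quantaloid setting it requires a careful exploitation of the directedness of the family $\{b_{i'',j''}\}_{j''}$ together with the $\Q$-functoriality of $G$; I expect this step to proceed in the computational style of Theorems \ref{CB} and \ref{CC}.
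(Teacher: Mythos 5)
Your treatment of $\mathcal{C}$ is complete and essentially the paper's: the paper gets the same key identity by the distributor computation $\colim(\phi,G)=\phi\circ G^{\natural}\circ(Y_{\bbB})_{\natural}=\bigvee_{i}G(a_{i})$ (and for $\mathcal{C}$ it in fact just cites Stubbe's earlier result on conical presheaves), while you phrase the same fact abstractly via $\colim(-,G)=\sup\circ\, G^{\to}$ being a composite of left adjoints; your observation that representables are joins over singleton (hence directed) families also matches the paper. The problem is $\mathcal{C}^{D}$: your argument stops exactly where the remaining content of the theorem lies. You never exhibit a directed family generating $\colim(\phi,G)$; you only name the obstacle -- extracting from $Y_{\bbB}(b_{i,j})\leq\bigvee_{j''}Y_{\bbB}(b_{i'',j''})$ a single $j''$ with $b_{i,j}\leq_{q}b_{i'',j''}$ -- and express the hope that it will succumb to a computation in the style of Theorems \ref{CB} and \ref{CC}. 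That is not a proof, and the worry you raise is substantive: evaluating that inequality at $b_{i,j}$ only yields $1_{q}\leq\bigvee_{j''}\bbB(b_{i,j},b_{i'',j''})$, a directed join in $\Q(q,q)$, and unless $1_{q}$ is compact with respect to directed joins (which fails already in $[0,\infty]_{+}$) this does not localize to a single index $j''$; so no purely formal exploitation of directedness of $\{b_{i'',j''}\}_{j''}$ will produce the required pairwise upper bounds.

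For comparison, the paper's own proof carries out exactly your computation $\colim(\phi,G)=\bigvee_{i}\bigvee_{k\in J_{i}}\bbB(-,b^{i}_{k})$ and then simply asserts that, by the directedness of $\{a_{i}\}_{i\in I}$ and of each $\{b^{i}_{k}\}_{k\in J_{i}}$, the combined set $\{b^{i}_{k}\mid i\in I,\,k\in J_{i}\}$ is directed -- with no further argument. In other words, the one step you could not complete is precisely the step the paper declares obvious, and the paper supplies no mechanism for the extraction you correctly identify as the crux. As it stands your proposal is therefore incomplete: to close it you would need either an actual proof that the combined family is directed (which seems to require more than the stated hypotheses, e.g.\ some compactness of the identities $1_{q}$), or the construction of a different directed family of representables with the same join as $\colim(\phi,G)$.
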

\begin{proof}
\isar{Clearly, both these classes contain all representable presheaves. That $\mathcal{C}$ is a saturated class of presheaves is proved in \cite[Proposition 5.3]{Stubbe2010}. Now let} ${\phi: {\bbA}\dist \{q\}}$ be a conical ideal and $G: {\bbA}\to{\P}{\bbB}$ a $\Q$-functor such that each $G(a)$ is a conical ideal. Then there exist is a directed set $\{a_{i}\}_{i\in I}$ in $({\bbA}_{q}, \leq_{q})$ such that $\phi=\bigvee_{i\in I}{\bbA}(-, a_{i})$. For each $a_{i}$, there is a directed set $\{b^{i}_{k}\}_{k\in J_{i}}$ such that $G(a_{i})=\bigvee_{k\in J_{i}}{\bbB}(-, b^{i}_{k})$. \begin{align} {\colim}(\phi, G)&=\phi\circ G^{\natural}\circ (Y_{\bbB})_{\natural}\\ &=\phi\circ{\P}{\bbB}(Y_{\bbB}-, G-)\\ &=\left(\bigvee_{i\in I}{\bbA}(-, a_{i})\right)\circ{\P}{\bbB}(Y_{\bbB}-, G-)\\ &=\bigvee_{i\in I}\left({\bbA}(-, a_{i})\circ{\P}{\bbB}(Y_{\bbB}-, G-)\right)\\ &=\bigvee_{i\in I}{\P}{\bbB}(Y_{\bbB}-, G(a_{i}))\\ &=\bigvee_{i\in I}G(a_{i})\\ &=\bigvee_{i\in I}\bigvee_{k\in J_{i}}{\bbB}(-, b^{i}_{k}) \end{align} By the directedness of $\{a_{i}\}_{i\in I}$ and $\{b^{i}_{k}\}_{k\in J_{i}}$, also $\{b^{i}_{k}\mid i\in I, k\in J_{i}\}$ is a directed set. Thus ${\colim}(\phi, G)$ is a conical ideal.
\end{proof}

\section{Continuity in ${\Q}$-categories}\label{S5}

\isar{As mentioned before, Stubbe \cite{Stubbe2007} studied ``totally continuous'' quantaloid-enriched categories, i.e.\ \emph{cocomplete} quantaloid-enriched categories which satisfy a further continuity condition with respect to the class of \emph{all presheaves}. For quantale-enriched categories, Hofmann and Waszkiewicz \cite{HW2011} expressed continuity with respect to any given class of presheaves, without any cocompleteness requirement. In this section, we shall combine these works to appropriately formulate $\Phi$-continuity for quantaloid-enriched categories.}

\isarnote{Reorganized and reformulated.}\isar{
Recall that, given a class of presheaves $\Phi$ on $\Cat(\Q)$, $\Phi\bbA$ is the category of $\Phi$-ideals in $\bbA$. It is a full subcategory of $\P\bbA$, and we know that the Yoneda embedding $Y_{\bbA}\:\bbA\to\P\bbA$ co-restricts to $\Phi\bbA\subseteq\P\bbA$. Now consider a further full subcategory of $\Phi\bbA$ (and thus of $\P\bbA$), determined by
\begin{equation}\label{EA}
(\Phi_{s}{\bbA})_0=\{\phi\in (\Phi{\bbA})_{0}\mid\sup(\phi)\mbox{ exists}\}.
\end{equation}
Because $\sup(Y_{\bbA}(a))=a$ for any $a\in\bbA_0$, and $Y_{\bbA}(\sup(\mu))\geq\mu$ for any $\mu\in\P\bbA$ whenever the supremum exists, we find that the co-restriction of the Yoneda embedding is right adjoint to the restriction of the supremum map:}
\begin{equation}\label{adj}
\bbA\xymatrix@C=8ex{\ar@/_2ex/[r]_{Y_{\bbA}}\ar@{}[r]|{\perp} & \ar@/_2ex/[l]_{\sup}}\Phi_s\bbA
\end{equation}
\begin{definition}\label{way-below}
For a ${\Q}$-category ${\bbA}$ and a class of presheaves $\Phi$, the \emph{$\Phi$-way-below distributor} on $\bbA$ is the distributor $\Downarrow: {\bbA}\dist{\bbA}$ defined by
\[\isar{\xymatrix@C=8ex@R=2ex{\bbA\ar@{-->}[dd]|{\distsign}_{\Downarrow\ :=(\sup)^{\natural}\searrow (Y_{\bbA})_{\natural}}\ar[dr]|{\distsign}^{(Y_{\bbA})_{\natural}} \\
& \Phi_s\bbA \\
\bbA\ar[ur]|{\distsign}_{(\sup)^{\natural}}}}\]
and the corresponding \emph{way-below functor} \isar{is defined as $F_{\Downarrow}:{\bbA}\to {\P}{\bbA}:a\mapsto\ \Downarrow^{\Phi}\!(-,a)$.}
\end{definition}
\isarnote{Left out the superscript $\Phi$.}\isar{The $\Q$-distributor $\Downarrow$ and the $\Q$-functor $F_{\Downarrow}$ determine each other under the ``classifying property'' of the $\Q$-category $\P\bbA$, cf.\ \cite[Proposition 6.1]{Stubbe20051}.}
\begin{proposition}\label{prop-1}
\isar{For any ${\Q}$-category $\bbA$ we have that $\Downarrow\ \leq{\bbA}$ (and thus also $F_{\Downarrow}\leq Y_{\bbA}$ and $\Downarrow\circ\Downarrow\ \leq\ \Downarrow$)}.
\end{proposition}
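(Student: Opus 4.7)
The plan is to unpack the defining formula $\Downarrow\ =(\sup)^{\natural}\searrow(Y_{\bbA})_{\natural}$ using the explicit description of right liftings in $\Dist(\Q)$ given by \eqref{AY}, together with the Yoneda Lemma \eqref{YonedaLemma}. A short calculation will yield, for any $c,a\in\bbA_0$, the identity
\[
\Downarrow(c,a)\ =\ \bigwedge_{\mu\in(\Phi_s\bbA)_0}\bbA(a,\sup(\mu))\searrow\mu(c),
\]
where we have used $(\sup)^{\natural}(a,\mu)=\bbA(a,\sup(\mu))$ and $(Y_{\bbA})_{\natural}(c,\mu)=(\Phi_s\bbA)(Y_{\bbA}c,\mu)=\mu(c)$.

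The key observation is then that the representable presheaf $Y_{\bbA}(a)=\bbA(-,a)$ itself belongs to $(\Phi_s\bbA)_0$: it is a $\Phi$-ideal by condition (i) of Definition \ref{class}, and its supremum exists and equals $a$, which is precisely what legitimates the adjunction \eqref{adj}. Instantiating the above infimum at $\mu=Y_{\bbA}(a)$ therefore gives the bound
\[
\Downarrow(c,a)\ \leq\ \bbA(a,\sup Y_{\bbA}(a))\searrow Y_{\bbA}(a)(c)\ =\ \bbA(a,a)\searrow\bbA(c,a).
\]
Using $1_{ta}\leq\bbA(a,a)$ and the fact that $-\searrow\bbA(c,a)$ is order-reversing in its first argument, the right-hand side is in turn bounded by $1_{ta}\searrow\bbA(c,a)=\bbA(c,a)$. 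This establishes $\Downarrow\ \leq\bbA$.

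The two parenthetical consequences come for free. For $F_{\Downarrow}\leq Y_{\bbA}$, the inequality holds pointwise: $F_{\Downarrow}(a)(c)=\Downarrow(c,a)\leq\bbA(c,a)=(Y_{\bbA}a)(c)$. For $\Downarrow\circ\Downarrow\,\leq\,\Downarrow$, monotonicity of composition in the quantaloid $\Dist(\Q)$ together with the identity law gives $\Downarrow\circ\Downarrow\leq\bbA\circ\Downarrow=\Downarrow$. There is no real obstacle in the argument; the single conceptual ingredient is that representable presheaves already lie in $\Phi_s\bbA$, and the rest is formal manipulation of the lifting $\searrow$ in $\Dist(\Q)$.
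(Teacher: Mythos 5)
Your proof is correct. It differs from the paper's argument mainly in presentation: you unpack the right lifting $\Downarrow\,=(\sup)^{\natural}\searrow(Y_{\bbA})_{\natural}$ into the explicit pointwise infimum $\Downarrow\!(c,a)=\bigwedge_{\mu\in(\Phi_s\bbA)_0}\bbA(a,\sup(\mu))\searrow\mu(c)$ and then instantiate at the representable $\mu=Y_{\bbA}(a)$, using that representables lie in $\Phi_s\bbA$ with $\sup(Y_{\bbA}(a))\cong a$. The paper argues pointfree: from the lifting inequality $(\sup)^{\natural}\circ\Downarrow\ \leq\ (Y_{\bbA})_{\natural}$ it whiskers with $(Y_{\bbA})^{\natural}$, uses $(Y_{\bbA})^{\natural}\circ(\sup)^{\natural}=(\sup\circ Y_{\bbA})^{\natural}=\bbA$ together with full faithfulness of Yoneda, $(Y_{\bbA})^{\natural}\circ(Y_{\bbA})_{\natural}=\bbA$, to get $\bbA\circ\Downarrow\ \leq\bbA$ and hence $\Downarrow\ \leq\bbA\searrow\bbA=\bbA$. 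The two routes rest on exactly the same ingredients ($\sup\circ Y_{\bbA}\cong 1_{\bbA}$ and the Yoneda Lemma), so the difference is one of style: yours is more computational and makes explicit the infimum formula that the paper only uses later (Propositions \ref{prop-2} and \ref{prop-alg}), while the paper's version is shorter and stays at the level of distributor calculus. One small point of care: $\sup(Y_{\bbA}(a))$ is only determined up to isomorphism, so strictly one should use $1_{ta}\leq\bbA(a,\sup(Y_{\bbA}(a)))$ rather than $\bbA(a,\sup(Y_{\bbA}(a)))=\bbA(a,a)$; your antitonicity step then goes through verbatim. Your derivations of the two parenthetical consequences are the standard ones and match what the paper leaves to the reader.
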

\begin{proof}
\isarnote{Made argument avoiding computations and left out obvious part of proof.} Consider the following diagram of $\Q$-distributors:
\[\xymatrix@C=8ex@R=2ex{\bbA\ar[dd]|{\distsign}_{\Downarrow}\ar[dr]|{\distsign}^{(Y_{\bbA})_{\natural}} \\
& \Phi_s\bbA\ar[r]|{\distsign}^{(Y_{\bbA})^{\natural}} & \bbA \\
\bbA\ar[ur]|{\distsign}_{(\sup)^{\natural}}}\]
By definition of $\Downarrow$ we know that $(\sup)^\nat\circ\Downarrow\ \leq\ (Y_{\bbA})_\nat$, so composing with $(Y_{\bbA})^\nat$ provides
$(Y_{\bbA})^{\natural}\circ(\sup)^{\natural}\circ\Downarrow\ \leq\ (Y_{\bbA})^{\natural}\circ(Y_{\bbA})_{\natural}$.
Since $(Y_{\bbA})^{\natural}\circ(\sup)^{\natural}=(\sup\circ Y_{\bbA})^{\natural}=(1_{\bbA})^\nat=\bbA$ and $(Y_{\bbA})^{\natural}\circ(Y_{\bbA})_{\natural}=\bbA$, we find $\bbA\circ\Downarrow\ \leq\bbA$, from which $\Downarrow\ \leq\bbA\searrow\bbA=\bbA$ follows.
\end{proof}
\isar{Even though $\Downarrow\:\bbA\dist\bbA$ is always a distributor, and so the corresponding functor $F_{\Downarrow}\:\bbA\to\P\bbA$ is well-defined, it need not be the case that $F_{\Downarrow}$ co-restricts to $\Phi_s\bbA$.}
\begin{proposition}\label{prop-cont}\isarnote{Regrouped Propositions, reformulated statements, simplified proofs.}
\isar{For a class of presheaves $\Phi$ and a ${\Q}$-category ${\bbA}$, the following statements are equivalent:
\begin{enumerate}[label=(\arabic*),nosep]
\item $\sup:\Phi_{s}{\bbA}\to {\bbA}$ has a left adjoint,
\item for all $a\in {\bbA}_{0}$, $F_{\Downarrow}(a)\in (\Phi_{s}{\bbA})_{0}$ and $\sup (F_{\Downarrow}(a))\cong a$,
\item $\Downarrow:\bbA\dist\bbA$ is a $\Phi$-distributor that is \emph{approximating}, that is, it satisfies ${\bbA}={\bbA}\swarrow \Downarrow$.
\end{enumerate}
In this case, $F_{\Downarrow}\:\bbA\to\P\bbA$ factors through the full inclusion $\Phi_s\bbA\hookrightarrow\P\bbA$ and the resulting co-restriction $F_{\Downarrow}\:\bbA\to\Phi_s\bbA$ is the left adjoint to $\sup:\Phi_{s}{\bbA}\to {\bbA}$.}
\end{proposition}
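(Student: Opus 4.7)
The plan is to cycle through the implications $(3)\Leftrightarrow (2)\Leftrightarrow (1)$, with the final clause of the proposition dropping out from the identifications made along the way. The key fact I use throughout is \eqref{AM}: for a presheaf $\mu\in(\P\bbA)_0$, the supremum $\sup(\mu)$ exists and is isomorphic to a given $a\in\bbA_0$ if and only if $\bbA\swarrow\mu=\bbA(a,-)$ as copresheaves. For $(3)\Leftrightarrow (2)$: unpacking the distributor identity $\bbA=\bbA\swarrow\Downarrow$ pointwise in the first argument $a$ gives $\bbA(a,-)=\bbA\swarrow F_{\Downarrow}(a)$, which by the above is exactly $\sup F_{\Downarrow}(a)\cong a$. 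The $\Phi$-distributor condition on $\Downarrow$ is precisely $F_{\Downarrow}(a)\in(\Phi\bbA)_0$ for all $a$, so combining both packages into $F_{\Downarrow}(a)\in(\Phi_s\bbA)_0$ with $\sup F_{\Downarrow}(a)\cong a$, which is $(2)$; the converse reads off identically.

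For $(2)\Rightarrow (1)$: condition $(2)$ makes $F_{\Downarrow}\:\bbA\to\P\bbA$ factor through $\Phi_s\bbA\hookrightarrow\P\bbA$, and I verify that its co-restriction $F_{\Downarrow}\:\bbA\to\Phi_s\bbA$ is left adjoint to $\sup$. The unit inequality $1_\bbA\leq\sup\circ F_{\Downarrow}$ is just $\sup F_{\Downarrow}(a)\cong a$. For the counit $F_{\Downarrow}\circ\sup\leq 1_{\Phi_s\bbA}$, I expand, via \eqref{AY},
\[F_{\Downarrow}(\sup\mu)(x)=\Downarrow(x,\sup\mu)=\bigwedge_{\nu\in(\Phi_s\bbA)_0}\bbA(\sup\mu,\sup\nu)\searrow\nu(x),\]
specialize $\nu=\mu$, and use $1_{t\mu}\leq\bbA(\sup\mu,\sup\mu)$ to bound this by $\mu(x)$.

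For $(1)\Rightarrow (2)$: given $L\dashv\sup$, the characterization of adjunctions via hom-distributors gives $L_\nat=(\sup)^\nat$. Combining this with the definition of $\Downarrow$ and with \eqref{AD},
\[\Downarrow=(\sup)^\nat\searrow(Y_{\bbA})_\nat=L_\nat\searrow(Y_{\bbA})_\nat=L^\nat\circ(Y_{\bbA})_\nat,\]
and a short calculation with the distributor composition formula and the Yoneda lemma yields $(L^\nat\circ(Y_{\bbA})_\nat)(x,a)=La(x)$; hence $F_{\Downarrow}(a)=La\in(\Phi_s\bbA)_0$ and $F_{\Downarrow}$ coincides with $L$ on the nose. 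In particular $\sup F_{\Downarrow}(a)\cong a$: the inequality $a\leq\sup L(a)$ is the unit of $L\dashv\sup$, while for the reverse inequality I apply the counit $L\circ\sup\leq 1_{\Phi_s\bbA}$ to $Y_{\bbA}(a)$ (using $\sup Y_{\bbA}=1_\bbA$ from \eqref{adj}) to get $La\leq Y_{\bbA}(a)$, whence $\sup La\leq a$. This delivers $(2)$, and since $F_{\Downarrow}=L$, the final clause of the proposition as well. The main technical obstacle is precisely this identification of $L$ with $F_{\Downarrow}$ via reading $\Downarrow$ as a right extension of hom-distributors; everything else amounts to unwinding definitions against \eqref{AM}.
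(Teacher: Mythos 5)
Your proposal is correct and follows essentially the same route as the paper: the same identification $\Downarrow\,=L^{\natural}\circ(Y_{\bbA})_{\natural}$ via $L_{\natural}=(\sup)^{\natural}$ and the Yoneda Lemma for $(1)\Rightarrow(2)$, the same explicit counit estimate $F_{\Downarrow}(\sup\mu)\leq\mu$ for $(2)\Rightarrow(1)$, and for $(2)\Leftrightarrow(3)$ a pointwise unwinding of ${\bbA}={\bbA}\swarrow\Downarrow$ against Equation \ref{AM}, which is just the paper's colimit argument made elementwise. No gaps.
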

\begin{proof}
\isar{($1\Rightarrow 2$) Suppose that $\sup: \Phi_{s}{\bbA}\to {\bbA}$ has a left adjoint $L: {\bbA}\to \Phi_{s}{\bbA}$ in $\Cat(\Q)$, then we can compute that
\begin{align}
\Downarrow\
& = (\sup)^{\natural}\searrow(Y_{\bbA})_{\natural} & \mbox{Definition \ref{way-below}} \\
& = (L)_{\natural}\searrow(Y_{\bbA})_{\natural} & L\dashv\sup\\
& = (L)^{\natural}\circ(Y_{\bbA})_{\natural} & \mbox{Equation \ref{AD}}\\
& = \Phi_s\bbA(Y_{\bbA}-,L-) & \mbox{Equation \ref{AF}} \\
& = \P\bbA(Y_{\bbA}-,L-) & \mbox{full subcategory}
\end{align}
and so, by the Yoneda Lemma, we find for any $a\in\bbA_0$ that
\begin{equation}\label{}
L(a)=\P\bbA(Y_{\bbA}-,L(a))=\ \Downarrow\!(-,a)=F_{\Downarrow}(a).
\end{equation}
This implies that each $F_{\Downarrow}(a)$ is in $\Phi_s\bbA$. Moreover, from $L\dashv\sup\dashv Y_{\bbA}$ and $\sup\circ Y_{\bbA}=1_{\bbA}$ we easily get $\sup\circ L=1_{\bbA}$, which means here that $\sup(F_{\Downarrow}(a))=a$ for all $a\in\bbA_0$. \\
($2\Rightarrow 1$) By hypothesis the functor $F_{\Downarrow}\:\bbA\to\P\bbA$ factors through $\Phi_s\bbA\hookrightarrow\P\bbA$, and the co-restriction $F_{\Downarrow}\:\bbA\to\Phi_s\bbA$ satisfies $\sup\circ F_{\Downarrow}=1_{\bbA}$. A direct computation shows that, for any $\phi\in\Phi_s\bbA$ and $a\in\bbA_0$,
\begin{align}
F_{\Downarrow}(\sup(\phi))(a)
& = \bigwedge_{\psi\in(\Phi_s\bbA)_0}\bbA(\sup(\phi),\sup(\psi))\searrow\psi(a)\\
& \leq \bbA(\sup(\phi),\sup(\phi))\searrow\phi(a) \\
& \leq \phi(a),
\end{align}
that is, $F_{\Downarrow}\circ\sup\leq 1_{\Phi_s\bbA}$. This means that $F_{\Downarrow}\dashv\sup$.\\
($2\Leftrightarrow 3$) By Definition\ \ref{way-below}, $\Downarrow\!(-,a)=F_{\Downarrow}(a)$ for all $a\in\bbA_0$, and so $\Downarrow\:\bbA\dist\bbA$ is a $\Phi$-distributor if and only if $F_{\Downarrow}(a)\in\Phi$ for each $a\in\bbA_0$. Furthermore,}
\begin{equation}\label{}
\sup(F_{\Downarrow}(a))=\colim(F_{\Downarrow}(a),1_{\bbA})=\colim(\Downarrow\!(-,a),1_{\bbA})=\colim(\Downarrow,1_{\bbA})(a)
\end{equation}
and the functor $\colim(\Downarrow,1_{\bbA})\:\bbA\to\bbA$ is characterized by
\begin{equation}\label{}
\colim(\Downarrow,1_{\bbA})_{\natural}=(1_{\bbA})_{\nat}\swarrow \Downarrow=\bbA\swarrow\Downarrow.
\end{equation}
\isar{Therefore we have that $\sup(F_{\Downarrow}(a))$ exists and equals $a$ for all $a\in\bbA_0$ if and only if $\colim(\Downarrow,1_{\bbA})=1_{\bbA}$, if and only if $\bbA=\bbA\swarrow\Downarrow$.}
\end{proof}
\begin{definition}\label{def-cont}
A ${\Q}$-category is said to be \emph{$\Phi$-continuous} if it satisfies the equivalent conditions of Proposition \ref{prop-cont}.
\end{definition}
\begin{proposition}\label{interpolating} If $\Phi$ is a saturated class of presheaves \isar{and $\bbA$ is a} $\Phi$-continuous ${\Q}$-category, \isar{then $\Downarrow\:\bbA\dist\bbA$ is \emph{interpolating}, that is,} $\Downarrow\ =\ \Downarrow\circ \Downarrow$.
\end{proposition}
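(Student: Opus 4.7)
The plan is to prove the two inequalities $\Downarrow\circ\Downarrow\ \leq\ \Downarrow$ and $\Downarrow\ \leq\ \Downarrow\circ\Downarrow$ separately. The first is already noted in Proposition \ref{prop-1}, so the real work is to establish $\Downarrow\ \leq\ \Downarrow\circ\Downarrow$. I proceed pointwise: fix $a\in\bbA_0$ and show $\Downarrow(-,a)\leq(\Downarrow\circ\Downarrow)(-,a)$ as presheaves on $\bbA$. By the composition rule for $\Q$-distributors, $(\Downarrow\circ\Downarrow)(-,a)$ is exactly the presheaf $F_{\Downarrow}(a)\circ\Downarrow\:\bbA\dist\{ta\}$.

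The first step is to check that $F_{\Downarrow}(a)\circ\Downarrow$ is a $\Phi$-ideal. By the $\Phi$-continuity of $\bbA$ (Proposition \ref{prop-cont}(3)), $\Downarrow$ is a $\Phi$-distributor, and by definition $F_{\Downarrow}(a)=\ \Downarrow(-,a)$ is itself a $\Phi$-ideal (hence a $\Phi$-distributor $\bbA\dist\{ta\}$). The saturation of $\Phi$ (Proposition \ref{prop-saturated}(3)) then guarantees that the composite $F_{\Downarrow}(a)\circ\Downarrow$ is again a $\Phi$-distributor, hence a $\Phi$-ideal.

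The second step is to verify that this $\Phi$-ideal has supremum $a$. By the exponential laws in the quantaloid $\Dist(\Q)$,
\[\bbA\swarrow(F_{\Downarrow}(a)\circ\Downarrow) = (\bbA\swarrow\Downarrow)\swarrow F_{\Downarrow}(a) = \bbA\swarrow F_{\Downarrow}(a) = \bbA(a,-),\]
where the second equality uses the approximating property $\bbA=\bbA\swarrow\Downarrow$ (again Proposition \ref{prop-cont}(3)) and the third uses $\sup(F_{\Downarrow}(a))\cong a$ (Proposition \ref{prop-cont}(2)) together with the formula (\ref{AM}). So by (\ref{AM}), $\sup(F_{\Downarrow}(a)\circ\Downarrow)$ exists and is isomorphic to $a$; hence $F_{\Downarrow}(a)\circ\Downarrow\in\Phi_s\bbA$.

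Finally, I invoke the adjunction $F_{\Downarrow}\dashv\sup$ from Proposition \ref{prop-cont}(1): its counit $F_{\Downarrow}\circ\sup\leq 1_{\Phi_s\bbA}$, applied to the element $F_{\Downarrow}(a)\circ\Downarrow\in\Phi_s\bbA$, yields
\[\Downarrow(-,a) = F_{\Downarrow}(a) = F_{\Downarrow}\bigl(\sup(F_{\Downarrow}(a)\circ\Downarrow)\bigr) \leq F_{\Downarrow}(a)\circ\Downarrow = (\Downarrow\circ\Downarrow)(-,a),\]
as required. The main obstacle is the careful bookkeeping of the $\swarrow$, $\circ$, and $(-)^{\nat}$ operations; conceptually, the proof hinges on the fact that the counit of $F_{\Downarrow}\dashv\sup$ expresses $F_{\Downarrow}(a)$ as the least element of $\Phi_s\bbA$ with supremum $a$, while saturation of $\Phi$ is precisely what is needed to place $F_{\Downarrow}(a)\circ\Downarrow$ into $\Phi_s\bbA$ with the same supremum, so that the counit can be invoked.
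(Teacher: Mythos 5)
Your proof is correct and follows essentially the same route as the paper's: both use the approximating property $\bbA=\bbA\swarrow\Downarrow$ twice to show that $(\Downarrow\circ\Downarrow)(-,a)$ has supremum (isomorphic to) $a$, use saturation of $\Phi$ to place it in $\Phi_s\bbA$, and then apply the adjunction $F_{\Downarrow}\dashv\sup$ to get $\Downarrow(-,a)\leq(\Downarrow\circ\Downarrow)(-,a)$, with the reverse inequality from Proposition \ref{prop-1}. The only (cosmetic) difference is that you argue pointwise at each $a$, whereas the paper phrases the same computation globally via $\colim(\Downarrow\circ\Downarrow,1_{\bbA})=1_{\bbA}$.
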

\begin{proof}\isarnote{Rewritten proof, mentionning explicitly the use of the hypotheses.}\isar{Using (3) of Proposition \ref{prop-cont} we can compute for the $\Phi$-continuous $\bbA$ that
\begin{equation}\label{}
(1_{\bbA})_{\natural}={\bbA}={\bbA}\swarrow \Downarrow=({\bbA}\swarrow \Downarrow)\swarrow \Downarrow={\bbA}\swarrow(\Downarrow\circ\Downarrow)=(1_{\bbA})_{\natural}\swarrow(\Downarrow\circ\Downarrow)
\end{equation}
which shows that $1_{\bbA}=\colim(\Downarrow\circ\Downarrow,1_{\bbA})$. From this we have, for any $a\in\bbA_0$,
\begin{equation}\label{}
a=\colim(\Downarrow\circ\Downarrow,1_{\bbA})(a)=\colim((\Downarrow\circ\Downarrow)(-,a),1_{\bbA})=\sup((\Downarrow\circ\Downarrow)(-,a)).
\end{equation}
Because $\Downarrow:\bbA\to\bbA$ is a $\Phi$-distributor and the class $\Phi$ is saturated, also $\Downarrow\circ\Downarrow$ is a $\Phi$-distributor (cf.\ (3) of Proposition \ref{prop-saturated}). The above thus implies that $(\Downarrow\circ\Downarrow)(-,a)\in\Phi_s\bbA$, and through the adjunction $F_{\Downarrow}\dashv\sup$ we get
\begin{equation}\label{}
\Downarrow\!(-,a)=F_{\Downarrow}(a)\leq(\Downarrow\circ\Downarrow)(-,a).
\end{equation}
This proves that $\Downarrow\ \leq\ \Downarrow\circ\Downarrow$; the reverse inequality was proved in Proposition \ref{prop-1}.}
\end{proof}

\begin{remark}\label{rem-1}
(1) If $\Phi=\mathcal{F}_{in}$, $\Phi=\mathcal{C}^{D}$ or $\Phi=\mathcal{I}_{in}$, then a ($\Phi$-cocomplete) $\Phi$-continuous {\bf 2}-category is just a (directed complete) continuous poset in the sense of \cite{Gie2003}.

(2) If ${\Q}={\bf 2}$, $\Phi$ is a saturated class of presheaves, $Z$ is a union-complete subset system, then a $\Phi$-continuous {\bf 2}-category is just a $Z$-continuous poset in the sense of \cite{Baranga1996}, and a $\Phi$-continuous $\Phi$-compelte {\bf 2}-category is just a $Z$-continuous poset in the sense of \cite{Bandelt1983}.

(3) In the case that ${\Q}$ is a commutative unital quantale, $\Phi$ and $J$ are saturated, the concept of a $\Phi$-continuous ${\Q}$-category agrees with that of a $J$-continuous quantale-enriched category in \cite{HW2011}.

(4) Let ${\Q}$ be a frame and let $\mathcal{F}_{in}$ be the class of presheaves of inhabited flat ideals. Note that the concept of an inhabited flat ideal agree with the concept of an ideal in \cite{LaiZhang2007}. Thus, an $\mathcal{F}_{in}$-continuous $\mathcal{F}_{in}$-cocomplete ${\Q}$-category is just a continuous $\Omega$-partially ordered set or a fuzzy domain in the sense of \cite{LaiZhang2007,Yao2010}.
\end{remark}

\begin{remark}\label{totally-continuous-remark} \isarnote{Simplified and put in a seperate remark.}\isar{The totally continuous $\Q$-categories as defined and studied in \cite{Stubbe2007} correspond (up to the duality referred to in an earlier footnote) with the \emph{$\P$-cocomplete $\P$-continuous $\Q$-categories} in this paper. It is furthermore shown in \cite[Proposition 5.1]{Stubbe2007} that a $\Q$-category $\bbA$ is $\P$-cocomplete and $\P$-continuous if and only if it is the category of ``regular presheaves on a regular $\Q$-semicategory''. We shall not recall this in full generality here, but simply mention a particular case: any presheaf category $\P\bbA$ is $\P$-cocomplete and $\P$-continuous. Indeed, referring to Equations \ref{AI} and \ref{AO}, we know that
\begin{equation}\label{}
F_{\Downarrow}\:\P\bbA\to\P\P\bbA\:\phi\mapsto Y_{\bbA}^{\rightarrow}(\phi)\quad\mbox{ and }\quad\sup\:\P\P\bbA\to\P\bbA\:\Psi\mapsto Y_{\bbA}^{\leftarrow}(\Psi)
\end{equation}
are left/right adjoint to each other.}
\end{remark}
\isar{Given an object $A\in\Q_0$, recall that we write $\{A\}$ for the singleton $\Q$-category whose single hom-arrow is $1_A$, and that $\Q^A$ denotes $\P\{A\}$. It thus follows from Remark \ref{totally-continuous-remark} that ${\Q}^{A}$ is ${\P}$-continuous. To prepare for further examples, we include it here, simplifying somewhat our notation:}
\begin{proposition}\isarnote{More general statement, simpler proof.}
\isar{Let $A$ be any object in any quantaloid ${\Q}$, then ${\Q}^{A}$ is ${\P}$-continuous and the left adjoint to the $\Q$-functor $\sup: {\P}{\Q}^{A}\to {\Q}^{A}$ is given by
\begin{equation}\label{}
d\:\Q^A\to\P\Q^A\:f\mapsto\Big(df\:\Q^A\dist\{\mathrm{dom}(f)\}\Big)\quad\mbox{ where }\quad df(g)=f\circ(1_A\swarrow g).
\end{equation}
If $1_A=\top_{A,A}$ is the top element of $\Q(A,A)$, then this simplifies to $df(g)=f\circ\top_{\mathrm{cod} (g),A}$.}
\end{proposition}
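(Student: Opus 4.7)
The plan is to obtain the result as a direct consequence of Remark \ref{totally-continuous-remark}, which asserts that for any $\Q$-category $\bbA$, the presheaf category $\P\bbA$ is $\P$-cocomplete and $\P$-continuous, with the way-below functor $F_{\Downarrow}=Y_{\bbA}^{\to}$ serving as the left adjoint to $\sup\:\P\P\bbA\to\P\bbA$. Specialising to $\bbA=\{A\}$, for which $\P\{A\}=\Q^A$ by Example \ref{AP}(4), this immediately shows that $\Q^A$ is $\P$-continuous and identifies the left adjoint of $\sup\:\P\Q^A\to\Q^A$ as $Y_{\{A\}}^{\to}\:\Q^A\to\P\Q^A$.

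It then remains to unpack this left adjoint and recognise it as the functor $d$ in the statement. Using Equation \ref{AI}, for $f\in\Q^A$ we have $Y_{\{A\}}^{\to}(f)=f\circ (Y_{\{A\}})^{\natural}$. Since $\{A\}$ has only one object, evaluating at an arbitrary $g\in\Q^A$ gives
\[Y_{\{A\}}^{\to}(f)(g)=f\circ (Y_{\{A\}})^{\natural}(g,A)=f\circ\P\{A\}(g,Y_{\{A\}}(A)),\]
where the second equality uses Equation \ref{AX}. Since $Y_{\{A\}}(A)$ is the representable presheaf corresponding to $1_A\in\Q(A,A)$, and hom-arrows in $\P\{A\}=\Q^A$ are computed as $\mu'\swarrow\mu$, the right-hand side equals $f\circ(1_A\swarrow g)$. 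This is precisely $df(g)$ as announced.

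For the special case $1_A=\top_{A,A}$, I would observe that $1_A\swarrow g$ is by definition the largest ${\Q}$-arrow $h\:\mathrm{cod}(g)\to A$ with $h\circ g\leq 1_A$; when $1_A$ is the top element of $\Q(A,A)$, the inequality $h\circ g\leq 1_A$ is automatic for every $h$, so $1_A\swarrow g=\top_{\mathrm{cod}(g),A}$. Substituting into $df(g)=f\circ(1_A\swarrow g)$ yields the promised simplification $df(g)=f\circ\top_{\mathrm{cod}(g),A}$.

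The proof is essentially bookkeeping: the genuine content resides entirely in Remark \ref{totally-continuous-remark}, and the only ``obstacle'' is keeping careful track of the domains, codomains, and types of the various ${\Q}$-arrows, presheaves, and distributors as one unravels the definition of $Y_{\{A\}}^{\to}$.
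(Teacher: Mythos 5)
Your proposal is correct and follows essentially the same route as the paper: both specialise Remark \ref{totally-continuous-remark} to $\bbA=\{A\}$, unpack $Y_{\{A\}}^{\to}(f)=f\circ(Y_{\{A\}})^{\natural}=f\circ(1_A\swarrow-)$ using that $Y_{\{A\}}$ sends the single object to $1_A$, and then note that $1_A\swarrow g=\top_{\mathrm{cod}(g),A}$ when $1_A=\top_{A,A}$ via the adjunction characterisation of $\swarrow$. No gaps.
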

\begin{proof}
\isar{It is a matter of making explicit the formula in Remark \ref{totally-continuous-remark} for $\bbA=\{A\}$. An object of $\Q^A$ is some morphism $f\:A\to X$ in $\Q$. The Yoneda embedding $Y_{\{A\}}\:\{A\}\mapsto\Q^A$ sends the single object of $\{A\}$ to $1_A$. Therefore
\begin{equation}\label{}
df:=F_{\Downarrow}(f)=Y_{\{A\}}^{\rightarrow}(f)=f\circ\Q^A(-,1_A)=f\circ(1_A\swarrow -)
\end{equation}
as claimed. For any $g\:A\to Y$ we have
\begin{equation}\label{}
\top_{Y,A}\leq 1_A\swarrow g\quad\Longleftrightarrow\quad\top_{Y,A}\circ g\leq 1_A,
\end{equation}
the latter of which holds whenever $1_A=\top_{A,A}$; thus we get $\top_{Y,A}=1_A\swarrow g$ in this case.}
\end{proof}
\isar{In a similar way we find:}
\begin{proposition}
\isar{Let $A$ be any object in a quantaloid ${\Q}$ for which $1_{A}=\top_{A, A}$.} If, for every morphism $f\:A\to X$ and object $B$ in ${\Q}$, the map $f\circ-:{\Q}(B, A)\to {\Q}(B,X)$ preserves finite meets, then ${\Q}^{A}$ is $\mathcal{F}$-continuous.
\end{proposition}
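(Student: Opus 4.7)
The plan is to leverage the preceding proposition, which establishes that $\Q^A$ is $\P$-continuous with left adjoint to $\sup\:\P\Q^A\to\Q^A$ given by $d\:\Q^A\to\P\Q^A$, $df(g)=f\circ\top_{\mathrm{cod}(g),A}$ (using $1_A=\top_{A,A}$). To promote this to $\mathcal{F}$-continuity it suffices, by Proposition \ref{prop-cont}(1), to show that $d$ factors through the full inclusion $\mathcal{F}\Q^A\hookrightarrow\P\Q^A$, i.e.\ that each $df$ is a flat ideal. Indeed, since $\Q^A$ is cocomplete we have $\mathcal{F}_s\Q^A=\mathcal{F}\Q^A$, and the adjunction $d\dashv\sup$ will then restrict, by fullness of the inclusion, to an adjunction between $d\:\Q^A\to\mathcal{F}_s\Q^A$ and $\sup\:\mathcal{F}_s\Q^A\to\Q^A$.

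The main computation is to evaluate $df\circ\lambda$ for any copresheaf $\lambda\:\{p\}\dist\Q^A$. Since $f\circ-$ preserves suprema,
\[
df\circ\lambda=\bigvee_{g\in(\Q^A)_0}f\circ\top_{\mathrm{cod}(g),A}\circ\lambda(g)=f\circ\Big(\bigvee_{g\in(\Q^A)_0}\top_{\mathrm{cod}(g),A}\circ\lambda(g)\Big).
\]
The crucial observation is that the bracketed supremum collapses to $\lambda(1_A)$: the lower bound $\lambda(1_A)$ is attained at the term $g=1_A$, while the upper bound follows from the copresheaf axiom $\Q^A(g,1_A)\circ\lambda(g)\leq\lambda(1_A)$ together with $\Q^A(g,1_A)=1_A\swarrow g=\top_{\mathrm{cod}(g),A}$ (the latter equality using $1_A=\top_{A,A}$). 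Hence $df\circ\lambda=f\circ\lambda(1_A)$.

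Given this formula, flatness of $df$ follows directly. A routine check shows that the pointwise meet of two parallel distributors is itself a distributor, so meets in $\Dist(\Q)$ of parallel copresheaves are computed pointwise; in particular $(\lambda_1\wedge\lambda_2)(1_A)=\lambda_1(1_A)\wedge\lambda_2(1_A)$. The hypothesis that $f\circ-\:\Q(p,A)\to\Q(p,\mathrm{cod}(f))$ preserves finite meets now gives
\[
df\circ(\lambda_1\wedge\lambda_2)=f\circ\bigl(\lambda_1(1_A)\wedge\lambda_2(1_A)\bigr)=\bigl(f\circ\lambda_1(1_A)\bigr)\wedge\bigl(f\circ\lambda_2(1_A)\bigr)=(df\circ\lambda_1)\wedge(df\circ\lambda_2),
\]
so $df$ is a flat ideal. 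The main obstacle is spotting the reduction $\bigvee_g\top_{\mathrm{cod}(g),A}\circ\lambda(g)=\lambda(1_A)$; once that identity is in hand, everything else reduces to the stated finite-meet hypothesis on $f\circ-$ and the formal restriction-of-adjoints argument in the first paragraph.
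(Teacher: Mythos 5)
Your proof is correct and takes essentially the same route as the paper's: it uses the explicit left adjoint $d$ from the preceding proposition, the collapse $\bigvee_{g}\top_{\mathrm{cod}(g),A}\circ\lambda(g)=\lambda(1_{A})$ (obtained from the copresheaf inequality at $1_{A}$ together with $1_{A}=\top_{A,A}$), and the finite-meet hypothesis on $f\circ-$ applied at $\lambda_{i}(1_{A})$. The only differences are presentational: you package the computation as the identity $df\circ\lambda=f\circ\lambda(1_{A})$ and spell out the restriction-of-the-adjunction step (via fullness of $\mathcal{F}\Q^{A}\hookrightarrow\P\Q^{A}$ and Proposition \ref{prop-cont}) that the paper leaves implicit.
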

\begin{proof}
Let $\lambda:\{q\}\dist{\Q}^{A}$ be \isar{any} copresheaf. Then for $f\in {\Q}(A, X)$ and $g\in {\Q}(A, Y)$ we have ${\Q}^{A}(f, g)\circ \lambda(f)\leq \lambda (g)$, i.e., $g\swarrow f\leq \lambda(g)\swarrow \lambda(f)$. Thus, $\top_{Y, A}\leq \top_{A, A}\swarrow g\leq \lambda(\top_{A, A})\swarrow \lambda(g)$, whence $\top_{Y, A}\circ \lambda(g)\leq \lambda(\top_{A, A})$. Furthermore, if $g\in{\Q}(A, A)$, then $\lambda(g)\leq \lambda(1_{A})$.

\isar{Now, f}or $f\in ({\Q}^{A})_{0}$ and $\lambda_{1}, \lambda_{2}\in({\P}^{\dag}{\Q}^{A})_{0}$ with the same type $q$, we have \begin{align} (df\circ\lambda_{1})\wedge (df\circ\lambda_{2})&=\left(\bigvee_{g\in({\Q}^{A})_{0}}df(g)\circ \lambda_{1}(g)\right)\wedge\left(\bigvee_{g\in({\Q}^{A})_{0}}df(g)\circ \lambda_{2}(g)\right)\\ &=\left(\bigvee_{g\in({\Q}^{A})_{0}}f\circ\top_{\mathrm{cod}(g), A}\circ \lambda_{1}(g)\right)\wedge\left(\bigvee_{g\in({\Q}^{A})_{0}}f\circ\top_{\mathrm{cod}(g), A}\circ \lambda_{2}(g)\right)\\ &=f\circ\left(\bigvee_{g\in({\Q}^{A})_{0}}\top_{\mathrm{cod}(g), A}\circ \lambda_{1}(g)\right)\wedge f\circ\left(\bigvee_{g\in({\Q}^{A})_{0}}\top_{\mathrm{cod}(g), A}\circ \lambda_{2}(g)\right)\\ &= (f\circ\lambda_{1}(1_{A}))\wedge (f\circ\lambda_{2}(1_{A}))
\end{align}
and
\begin{align}
df\circ (\lambda_{1}\wedge\lambda_{2})
&=\bigvee_{g\in({\Q}^{A})_{0}}df(g)\circ(\lambda_{1}\wedge\lambda_{2})(g)\\
&\geq df(1_{A})\circ (\lambda_{1}\wedge\lambda_{2})(1_{A})\\
&=f\circ (\lambda_{1}(1_{A})\wedge\lambda_{2}(1_{A})).
\end{align}
By the above arguments, it is easy to see that $(df\circ\lambda_{1})\wedge (df\circ\lambda_{2})=df\circ (\lambda_{1}\wedge\lambda_{2})$. Thus $df$ is a flat ideal for every $f\in ({\Q}^{A})_{0}$. Therefore ${\Q}^{A}$ is $\mathcal{F}$-continuous.
\end{proof}

\isarnote{Reformulated, put the historical references right.}\isar{For the next Proposition, we recall from \cite{Hohle2011,Tao2014}\isarnote{} that a (unital) quantale $(Q,\bigvee,\&,1)$, in which we shall write the residuations as $(a\&-)\dashv(a\backslash-)$ and $(-\&a)\dashv(-/a)$, is said to be \emph{divisible} when
\begin{equation}\label{}
a\&(a\backslash b)=a\wedge b=(b/a)\&a
\end{equation}
for all $a,b\in Q$. As pointed out in the cited references, any such divisible quantale enjoys several properties, notably:
\begin{enumerate}[label=(\roman*),nosep]
\item $Q$ is \emph{integral}, that is, $1=\top$,
\item $Q$ is \emph{localic}, that is, $a\wedge\bigvee_jb_j=\bigvee_j(a\wedge b_j)$ for all $a,(b_j)_j\in Q$,
\item $(b\wedge c)\&a=(b\&a)\wedge(c\&a)$ and $a\&(b\wedge c)=(a\&b)\wedge (a\&c)$ for all $a,b,c\in Q$.
\end{enumerate}
As Hohle and Kubiak \cite{Hohle2011} first pointed out, there is an important \emph{quantaloid} $B_Q$ associated with a divisible quantale, as follows:}
\begin{enumerate}[label=-,nosep]
\item objects: are the elements $x,y,z,...$ of $Q$,
\item hom-lattices: $B_{Q}(x,y):=\{a\in Q \mid a\leq x\wedge y\}$ (with order inherited from $Q$),
\item composition: for $a\in B_{Q}(x,y)$ and $b\in B(y,z)$, $b\circ a:=b\&(y\backslash a) = (b/y )\& a$,
\item identities: $1_{x}:=x$.
\end{enumerate}
\isar{Categories enriched over this quantaloid can be regarded as fuzzy sets endowed with fuzzy preorders \cite{Hohle2011,Tao2014}, and when applied to the divisible quantale $([0,\infty],\bigwedge,+,0)$ of Example 2.7(3), those enriched categories are (generalized) partial metric spaces \cite{Hohle2011,Stubbe2014,HS2018}. (In a suitable context, the construction of $B_Q$ from $Q$ has a universal property, see \cite[Example 2.14]{Stubbe2014}.)}

\begin{proposition}\isarnote{Slightly changed notation from greek to roman letters, to better suit the notation above.}
Let $(Q,\isar{\bigvee,\&,1})$ be a \isar{unital divisible quantale and $B_{Q}$ be the quantaloid described above}. Then for every object $A$ in $B_{Q}$, $(B_{Q})^{A}$ is $\mathcal{F}$-continuous.
\end{proposition}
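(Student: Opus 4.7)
The plan is to invoke the preceding proposition, instantiated with the ambient quantaloid $\Q$ taken to be $B_{Q}$. Two hypotheses must then be verified for each object $A$ of $B_{Q}$: that $1_{A}=\top_{A,A}$, and that $f\circ-\:B_{Q}(B,A)\to B_{Q}(B,X)$ preserves binary meets for every morphism $f\:A\to X$ and every object $B$.

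The first hypothesis is immediate from the construction of $B_{Q}$: the identity on $A$ is the element $A\in Q$ itself, and $B_{Q}(A,A)=\{a\in Q\mid a\leq A\}$ plainly has $A$ as its top element.

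For the second hypothesis, I would exploit the divisibility of $Q$. By definition of composition in $B_{Q}$, one has $f\circ a=(f/A)\,\&\,a$, with the residual $f/A$ and the product $\&$ computed in $Q$. Property (iii) of divisible quantales recalled just before the statement --- namely that $\&$ distributes over binary meets in each argument --- then gives directly
\[
f\circ(a_{1}\wedge a_{2})=(f/A)\,\&\,(a_{1}\wedge a_{2})=\bigl((f/A)\,\&\,a_{1}\bigr)\wedge\bigl((f/A)\,\&\,a_{2}\bigr)=(f\circ a_{1})\wedge(f\circ a_{2}),
\]
which is exactly the binary meet-preservation required.

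The only potentially subtle point is whether ``finite meets'' in the preceding proposition must include the empty meet; preservation of top would demand $f\circ(B\wedge A)=B\wedge X$, which generally fails in $B_{Q}$. Inspecting that proof shows, however, that only binary meets (arising from terms like $\lambda_{1}(1_{A})\wedge\lambda_{2}(1_{A})$) are actually used, so the binary version suffices and the $\mathcal{F}$-continuity of $(B_{Q})^{A}$ follows at once from the preceding proposition.
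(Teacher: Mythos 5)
Your proposal is correct and follows essentially the same route as the paper: invoke the preceding proposition for $\Q=B_Q$, note integrality ($1_A=A=\top_{A,A}$), and derive binary meet-preservation of $f\circ-$ from divisibility, the only cosmetic difference being that you use the presentation $f\circ a=(f/A)\,\&\,a$ and distributivity of $\&$ over meets, while the paper uses $f\,\&\,(A\backslash a)$ together with meet-preservation of $A\backslash-$. Your remark that only binary (not empty) meets are needed is accurate and consistent with what the proof of the preceding proposition actually uses.
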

\begin{proof}
\isar{The quantaloid $B_Q$ being integral (i.e.\ $1_x=x$ is the top element of $B_Q(x,x)$), we only need to show that, for any $b\in B_Q(y,z)$ and $x\in(B_Q)_0$, the sup-morphism $b\circ-\:B_Q(x,y)\to B_Q(x,z)$ preserves finite meets. For $a_1,a_2\in B_Q(x,y)$ we compute:
$b\circ(a_{1}\wedge a_{2})
=b\&(y\backslash(a_{1}\wedge a_{2}))
=b\&(y\backslash a_{1}\wedge y\backslash a_{2})
=b\&(y\backslash a_{1})\wedge b\&(y\backslash a_{2})
=b\circ a_{1}\wedge b\circ a_{2}$.}
\end{proof}
\begin{example} \isar{Following \cite[Examples 2.7 and 2.11]{Stubbe2014}, for any set $X$ we denote by} ${\Q}(X)$ the quantaloid whose objects are the subsets of $X$, and in which the arrows from $S\subseteq X$ to $T\subseteq X$ are precisely all $U\subseteq S\cap T$.\isarnote{Made a single reference to \cite{Stubbe2014}.} The composition law in ${\Q}(X)$ is given by intersection, and the identity on the object $S\subseteq X$ is $S:S\to S$ itself. Using ${\Q}(X)$-enriched categories, we can overcome the issue of partial elements in a quantale-enriched category. \isar{In fact, it is easy to see that the quantale $({\P}(X),\bigcup,\cap,X)$ of subsets of $X$ is divisible, and that the quantaloid ${\Q}(X)$ is precisely $B_{{\P}(X)}$.} Therefore, for every object $S$ in ${\Q}(X)$, the ${\Q}(X)$-enriched category ${\Q}(X)^{S}$ is $\mathcal{F}$-continuous.
\end{example}
\isarnote{Moved up the end of this section to earlier position.}

\section{Algebraicity in ${\Q}$-categories}\label{S6}

In this section \isar{we sketch the appropriate generalization of \cite[Section 6]{Stubbe2007}: we define and study (approximation by) $\Phi$-compact elements in ${\Q}$-categories.}

\isar{Recall from Proposition \ref{prop-1} that, for any class of presheaves $\Phi$ and any $\Q$-category $\bbA$, we always have a $\Q$-functor $F_{\Downarrow}\:\bbA\to\P\bbA$ which satisfies $F_{\Downarrow}\leq Y_{\bbA}$. We now study when this inequality is actually an equality.}
\begin{proposition}\label{prop-2}\isarnote{Turned Remark into Proposition and moved it up. (There was a mistake in the Remark: for general $\phi\in\Phi\bbA$ there is no $\sup$. Replaced it with correct condition.)}
\isar{Let ${\bbA}$ be a ${\Q}$-category and $\Phi$ a class of presheaves on $\Cat(\Q)$. For $a\in {\bbA}_{0}$, the following condititions are equivalent:
\begin{enumerate}[label={(\arabic*)},nosep]
\item $Y_{\bbA}(a)\leq F_{\Downarrow}(a)$,
\item ${\bbA}(x,a)\leq\ \Downarrow\!(x, a)$ for every $x\in {\bbA}_{0}$,
\item ${\bbA}(a,x)\leq\ \Downarrow\!(a, x)$ for every $x\in {\bbA}_{0}$,
\item $1_{ta}\leq\ \Downarrow\!(a, a)$.
\item ${\bbA}(a,\sup(\phi))\leq \phi(a)$ for all $\phi\in(\Phi_{\isar{s}}{\bbA})_{0}$.
\end{enumerate}
In all but the fourth condition, the inequality can be replaced by an equality.}
\end{proposition}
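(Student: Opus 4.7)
The plan is to prove the cycle (1)$\Leftrightarrow$(2)$\Leftrightarrow$(4)$\Leftrightarrow$(3) as a tight diagram chase using only the fact that $\Downarrow$ is a $\Q$-distributor together with Proposition \ref{prop-1}, and then to deduce (4)$\Leftrightarrow$(5) by unfolding the defining formula of $\Downarrow$ at the diagonal entry. The equalities at the end will follow from the fact that $\Downarrow\leq\bbA$ (and hence $F_{\Downarrow}\leq Y_{\bbA}$) always holds, and from the unit of the adjunction $\sup\dashv Y_{\bbA}$ displayed in (\ref{adj}).

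For (1)$\Leftrightarrow$(2), I would simply unpack the local order on $\P\bbA$: since $Y_{\bbA}(a)$ and $F_{\Downarrow}(a)$ are parallel presheaves with common type $ta$, the inequality $Y_{\bbA}(a)\leq F_{\Downarrow}(a)$ reduces to the pointwise inequality $\bbA(x,a)\leq\ \Downarrow\!(x,a)$ for all $x$. For the equivalence of (2), (3) and (4), the trick is that $\Downarrow\:\bbA\dist\bbA$ is a $\Q$-distributor, so
\[
\bbA(v,v')\circ\ \Downarrow\!(u,v)\circ\bbA(u',u)\ \leq\ \Downarrow\!(u',v')
\]
for all $u,u',v,v'\in\bbA_0$. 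From (2), specializing to $x=a$ gives $\bbA(a,a)\leq\ \Downarrow\!(a,a)$, and since $1_{ta}\leq\bbA(a,a)$, (4) follows. From (3), specializing to $x=a$ similarly yields (4). Conversely, assuming (4), for any $x$ we obtain
\[
\bbA(x,a)=1_{ta}\circ\bbA(x,a)\leq\ \Downarrow\!(a,a)\circ\bbA(x,a)\leq\ \Downarrow\!(x,a)
\]
by the distributor property (with $u'=x$, $u=a$, $v=v'=a$), which is (2); analogously, composing on the other side gives (3).

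For (4)$\Leftrightarrow$(5), I would expand the defining formula $\Downarrow\ =(\sup)^\nat\searrow(Y_{\bbA})_\nat$ via equation (\ref{AY}). Since $(\sup)^\nat(a,\phi)=\bbA(a,\sup\phi)$ and $(Y_{\bbA})_\nat(x,\phi)=\phi(x)$ by Yoneda, one obtains
\[
\Downarrow\!(a,a)=\bigwedge_{\phi\in(\Phi_{s}\bbA)_0}\bbA(a,\sup\phi)\searrow\phi(a).
\]
Hence $1_{ta}\leq\ \Downarrow\!(a,a)$ holds iff $1_{ta}\leq\bbA(a,\sup\phi)\searrow\phi(a)$ for every $\phi\in(\Phi_s\bbA)_0$, and by the adjunction $\bigl(\bbA(a,\sup\phi)\circ-\bigr)\dashv\bigl(\bbA(a,\sup\phi)\searrow-\bigr)$ this is precisely $\bbA(a,\sup\phi)\leq\phi(a)$, which is (5).

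Finally, for the ``inequality to equality'' claim in (1), (2), (3), (5): Proposition \ref{prop-1} gives $\Downarrow\ \leq\bbA$, so (2) and (3) are automatically equalities under the hypothesis; equivalently $F_{\Downarrow}\leq Y_{\bbA}$, so (1) is also an equality. For (5), the unit of the adjunction $\sup\dashv Y_{\bbA}$ from (\ref{adj}) gives $\phi\leq Y_{\bbA}(\sup\phi)$ for every $\phi\in(\Phi_s\bbA)_0$, that is $\phi(a)\leq\bbA(a,\sup\phi)$, which provides the reverse inequality. There is no real obstacle here; the only care required is bookkeeping of the types and the correct ``direction'' of the residuation $\searrow$ when deriving the explicit formula for $\Downarrow\!(a,a)$.
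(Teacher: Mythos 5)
Your proof is correct and follows essentially the same route as the paper's: (1)$\Leftrightarrow$(2) by unwinding the order on $\P{\bbA}$, (2),(3)$\Rightarrow$(4) by putting $x=a$ and using $1_{ta}\leq{\bbA}(a,a)$, (4)$\Rightarrow$(2),(3) via the distributor inequalities for $\Downarrow$, and (4)$\Leftrightarrow$(5) by the explicit formula $\Downarrow\!(x,a)=\bigwedge_{\phi\in(\Phi_s{\bbA})_0}{\bbA}(a,\sup\phi)\searrow\phi(x)$. Your explicit verification of the ``inequality can be replaced by equality'' claim, using Proposition \ref{prop-1} and the unit of the adjunction in (\ref{adj}), is a small addition that the paper leaves implicit but is entirely in the same spirit.
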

\begin{proof}\isarnote{Added a short proof.}\isar{
($1\Leftrightarrow 2$) Is a tautology.\\
($2\Rightarrow 4$ and $3\Rightarrow 4$) Put $x=a$ and use that $1_{ta}\leq\bbA(a,a)$.\\
($4\Rightarrow 2$ and $4\Rightarrow 3$) Follows directly from the distributor inequalities
\begin{equation}\label{}
\Downarrow\!(a,a)\circ\bbA(x,a)\leq\ \Downarrow\!(x,a)\mbox{ and }\ \bbA(a,x)\circ\Downarrow\!(a,a)\leq\ \Downarrow\!(a,x).
\end{equation}
($4\Leftrightarrow 5$) Follows directly from an explicit computation of $\Downarrow$ with Definition 5.1.}
\end{proof}
\begin{definition}\label{def-compact}
Let ${\bbA}$ be a ${\Q}$-category and $\Phi$ a class of presheaves on $\Cat(\Q)$. We say that $a\in {\bbA}_{0}$ is $\Phi$-compact if the equivalent condititions in Proposition \ref{prop-2} hold.
\end{definition}
\isar{There are non-trivial examples of such compact elements:}
\begin{proposition}\label{prop-repr}
Let $\Phi$ be a saturated class of presheaves on $\Cat(\Q)$. For any $\Q$-category ${\bbA}$ and any $x\in {\bbA}_{0}$, the \isar{representable presheaf} $Y_{\bbA}(x)$ is $\Phi$-compact in $\Phi{\bbA}$.
\end{proposition}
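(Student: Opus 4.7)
The plan is to verify condition~(5) of Proposition~\ref{prop-2}, applied to the $\Q$-category $\Phi\bbA$ and the element $Y_{\bbA}(x)\in(\Phi\bbA)_0$. That is, for every $\Psi\in(\Phi_s(\Phi\bbA))_0$, I need to show $\Phi\bbA(Y_{\bbA}(x),\sup\Psi)\leq\Psi(Y_{\bbA}(x))$. Since $\Phi$ is saturated, Proposition~\ref{prop-saturated} guarantees that $\Phi\bbA$ is itself $\Phi$-cocomplete, so every $\Psi\in(\Phi(\Phi\bbA))_0$ has a supremum and $\Phi_s(\Phi\bbA)=\Phi(\Phi\bbA)$.

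First I will simplify the left-hand side. Since $\Phi\bbA$ is a full subcategory of $\P\bbA$, we have $\Phi\bbA(Y_{\bbA}(x),\mu)=\P\bbA(Y_{\bbA}(x),\mu)=\mu(x)$ for every $\mu\in(\Phi\bbA)_0$ by the Yoneda Lemma~(\ref{YonedaLemma}); in particular $\Phi\bbA(Y_{\bbA}(x),\sup\Psi)=(\sup\Psi)(x)$.

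Next I will compute $(\sup\Psi)(x)$. By the last sentence of Proposition~\ref{prop-saturated}, $\sup\Psi=\sup(i_{\bbA}^{\to}\Psi)$ inside $\P\bbA$, and by~(\ref{AO}) the latter equals $(i_{\bbA}^{\to}\Psi)\circ(Y_{\bbA})_{\natural}=\Psi\circ i_{\bbA}^{\natural}\circ (Y_{\bbA})_{\natural}$. Here $Y_{\bbA}\:\bbA\to\P\bbA$ factors as $i_{\bbA}\circ Y_{\bbA}\:\bbA\to\Phi\bbA\hookrightarrow\P\bbA$ (the co-restricted Yoneda followed by the full inclusion), and $i_{\bbA}$ being fully faithful yields $i_{\bbA}^{\natural}\circ (i_{\bbA})_{\natural}=\Phi\bbA$. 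Together these identities collapse the above composite to $\sup\Psi=\Psi\circ (Y_{\bbA})_{\natural}$, where the distributor $(Y_{\bbA})_{\natural}$ is now that of the co-restricted Yoneda $\bbA\to\Phi\bbA$.

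Finally, evaluating at $x$ and using $(Y_{\bbA})_{\natural}(x,\mu)=\Phi\bbA(Y_{\bbA}(x),\mu)=\mu(x)$ (Yoneda in $\Phi\bbA$) gives $(\sup\Psi)(x)=\bigvee_{\mu\in(\Phi\bbA)_0}\Psi(\mu)\circ\mu(x)$. But the same expression equals $\Psi(Y_{\bbA}(x))$, because $\Psi\circ\Phi\bbA=\Psi$ (the identity distributor acts as identity) and $\Phi\bbA(Y_{\bbA}(x),\mu)=\mu(x)$. Therefore $(\sup\Psi)(x)=\Psi(Y_{\bbA}(x))$, which is in fact equality, stronger than the required inequality. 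The main technical point is the collapse $i_{\bbA}^{\natural}\circ(Y_{\bbA})_{\natural}=(Y_{\bbA})_{\natural}$ hidden in paragraph three, which rests on the factorization of $Y_{\bbA}$ through the full inclusion $i_{\bbA}$; once that identification is made, everything else is routine Yoneda manipulation.
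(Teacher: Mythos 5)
Your proposal is correct and takes essentially the same route as the paper: it verifies condition (5) of Proposition \ref{prop-2} for $\Phi\bbA$ using the last part of Proposition \ref{prop-saturated}, Equation \ref{AO}, the Yoneda Lemma and the full faithfulness of $i_{\bbA}$ — your distributor-level collapse $i_{\bbA}^{\natural}\circ(i_{\bbA})_{\natural}=\Phi\bbA$ is just the hom-level computation of the paper's proof packaged as a composition of distributors. The only (harmless) difference is that you note the comparison is in fact an equality, whereas the paper stops at the inequality that condition (5) requires.
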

\begin{proof}\isarnote{Put slightly more detail in the argument, and put it in "align" for clarity.}
By Proposition \ref{prop-saturated} we know that $\Phi\bbA$ is $\Phi$-cocomplete, and that the full embedding $i_{\bbA}\:\Phi\bbA\to\P\bbA$ is $\Phi$-cocontinuous. For clarity, we shall write $Y'_{\bbA}\:\bbA\to\Phi\bbA$ for the factorisation of the Yoneda embedding $Y_{\bbA}\:\bbA\to\P\bbA$ through $i_{\bbA}$. Then we can compute for any $\Psi\in(\Phi(\Phi\bbA))_0$ and $x\in\bbA_0$ that
\begin{align}
\Phi{\bbA}(Y'_{\bbA}(x),\sup(\Psi))
&=\P\bbA(i(Y'_{\bbA}(x)),i(\sup(\Psi))) & \mbox{full embedding} \\
&=\P\bbA(Y_{\bbA}(x),\sup(i_{\bbA}^{\rightarrow}(\Psi))) & \mbox{$\Phi$-cocontinuity}\\
&=\sup(i_{\bbA}^{\rightarrow}(\Psi))(x) & \mbox{Equation \ref{YonedaLemma}}\\
&=(i_{\bbA}^{\rightarrow}(\Psi))(Y_{\bbA}(x)) & \mbox{Equation \ref{AO}}\\
&=\Psi\circ {\P}{\bbA}(Y_{\bbA}(x), i-) & \mbox{definition of $i_{\bbA}^{\rightarrow}$} \\
&=\Psi\circ\Phi\bbA(Y'_{\bbA}(x),-) & \mbox{full embedding} \\
&\leq\Psi(Y'_{\bbA}(x)) & \mbox{distributor inequality}
\end{align}
The result follows from (5) in Proposition \ref{prop-2}.
\end{proof}
\isarnote{Reformulated slightly. There is already a functor $i$ in Prop. 3.8, so call it $j$ here.}
\isar{Writing $\bbA_c$ for the full subcategory determined by the $\Phi$-compact elements of $\bbA$, it is clear from the definition that the full inclusion  $j_{\bbA}\:\bbA_c\hookrightarrow\bbA$ is the \emph{inverter} \cite{bkps89} of the 2-cell
\begin{equation}\label{}
\bbA\xymatrix@C=8ex{
\ar@/^1ex/@<0.5ex>[r]^{Y_{\bbA}}\ar@{}[r]|{\rotatebox{90}{$\leq$}}\ar@/_1ex/@<-0.5ex>[r]_{F_{\Downarrow}} &}\P\bbA
\end{equation}
in $\Cat(\Q)$. We furthermore define the ${\Q}$-distributor
\begin{equation}\label{}
\Sigma_{\bbA}:=(j_{\bbA})_{\natural}\circ (j_{\bbA})^{\natural}:{\bbA}\dist{\bbA},
\end{equation}
and use the classifying property of $\P\bbA$ to define the functor
\begin{equation}\label{}
S_{\bbA}:{\bbA}\to {\P}{\bbA}\:a\mapsto\Sigma_{\bbA}(-,a)
\end{equation}
too. By Propositions \ref{prop-2} and \ref{prop-1} it easily follows that
\begin{equation}\label{}
\Sigma_{\bbA}(x,y)=\bigvee_{a\in\bbA_c}\bbA(a,y)\circ\bbA(x,a)\leq \bigvee_{a\in\bbA_c}\Downarrow\!(a,y)\ \circ\Downarrow\!(x,a)\leq\ \Downarrow\!(x,y),
\end{equation}
that is $\Sigma_{\bbA}\leq\ \Downarrow$ in $\Dist(\Q)$, and thus $S_{\bbA}\leq F_{\Downarrow}$ in $\Cat(\Q)$ too. }
\begin{definition}
Let $\Phi$ be a class of presheaves on $\Cat(\Q)$. We say that a $\Q$-category $\bbA$ is \emph{$\Phi$-algebraic} if, for every $x\in {\bbA}_{0}$, $S_{\bbA}(x)\in (\Phi_{s}{\bbA})_{0}$ and $x\cong\sup (S_{\bbA}(x))$.
\end{definition}
\begin{remark}
By \isar{a} similar analysis \isar{as} in Remarks \ref{rem-1} and \ref{totally-continuous-remark}, we can see that algebraic posets \cite{Gie2003}, $Z$-algebraic posets \cite{Baranga1996}, algebraic $\Omega$-categories \cite{Yao2014} and totally algebraic ${\Q}$-categories \cite{Stubbe2007} are \isar{particular cases of (skeletal)} $\Phi$-algebraic ${\Q}$-categories.
\end{remark}
\begin{proposition}\label{prop-alg}\isarnote{Removed hypothesis that $\bbA$ is $\Phi$-cocomplete: one only has to take the $\bigwedge$ over $(\Phi_s\bbA)_0$.}
\isar{Let $\Phi$ be a class of presheaves on $\Cat(\Q)$.} A $\Q$-category ${\bbA}$ is $\Phi$-algebraic \isar{if and only if} ${\bbA}$ is $\Phi$-continuous and $S_{\bbA}=F_{\Downarrow}$.
\end{proposition}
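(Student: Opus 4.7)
The plan is to prove both directions. The backward implication is essentially immediate: if $\bbA$ is $\Phi$-continuous and $S_{\bbA}=F_{\Downarrow}$, then by Proposition \ref{prop-cont}(2) each $F_{\Downarrow}(x)$ lies in $(\Phi_s\bbA)_0$ and satisfies $\sup(F_{\Downarrow}(x))\cong x$, which upon substituting $S_{\bbA}$ for $F_{\Downarrow}$ gives $\Phi$-algebraicity. All the substance is therefore in the forward direction.

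Assume then that $\bbA$ is $\Phi$-algebraic. The paragraph preceding Definition \ref{def-compact} (together with the remark that $\Sigma_{\bbA}\leq\ \Downarrow$) already gives me $S_{\bbA}\leq F_{\Downarrow}$, so my main goal is to prove the reverse inequality $F_{\Downarrow}\leq S_{\bbA}$. Once this is done, $F_{\Downarrow}(x)=S_{\bbA}(x)$ will lie in $(\Phi_s\bbA)_0$ by algebraicity, and $\sup(F_{\Downarrow}(x))=\sup(S_{\bbA}(x))\cong x$ will follow; $\Phi$-continuity is then immediate from Proposition \ref{prop-cont}(2).

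To prove $F_{\Downarrow}\leq S_{\bbA}$, I first unfold Definition \ref{way-below} using (\ref{AY}) to obtain the explicit meet formula
\begin{equation*}
\Downarrow\!(y,x)=\bigwedge_{\phi\in(\Phi_s\bbA)_0}\bbA(x,\sup(\phi))\searrow\phi(y),
\end{equation*}
which is the same formula that implicitly underlies Proposition \ref{prop-2}(5). Algebraicity tells me that $\phi=S_{\bbA}(x)$ is a legal index in this meet, and $x\cong\sup(S_{\bbA}(x))$ yields $1_{tx}\leq\bbA(x,\sup(S_{\bbA}(x)))$. Substituting this particular choice of $\phi$ into the meet, and using both the order-reversing behaviour of $\searrow$ in its first argument and the identity $1_{tx}\searrow v=v$, I will conclude $\Downarrow\!(y,x)\leq S_{\bbA}(x)(y)=\Sigma_{\bbA}(y,x)$ for all $x,y\in\bbA_0$, which is exactly $F_{\Downarrow}\leq S_{\bbA}$.

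The main obstacle is to arrive at the meet formula above with all type-matching in place, and to recognise that the single witness $\phi=S_{\bbA}(x)$ provided by algebraicity is enough to dominate the entire infimum. Conceptually, $\Phi$-algebraicity is precisely the assertion that every $x$ is presented as the supremum of the canonical $\Phi$-ideal $S_{\bbA}(x)$ built from the compact elements below it, and this one $\Phi$-ideal already saturates the optimisation defining the way-below distributor.
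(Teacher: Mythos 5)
Your proposal is correct and follows essentially the same route as the paper: the backward direction is immediate, and for the forward direction the paper likewise unfolds $\Downarrow\!(y,x)=\bigwedge_{\phi\in(\Phi_s\bbA)_0}\bbA(x,\sup(\phi))\searrow\phi(y)$ and bounds the meet by the single witness $\phi=S_{\bbA}(x)$ supplied by algebraicity, using $x\cong\sup(S_{\bbA}(x))$ and $1_{tx}\searrow v=v$ to get $F_{\Downarrow}\leq S_{\bbA}$, the reverse inequality being automatic. Your explicit closing remark that continuity then follows from Proposition \ref{prop-cont}(2) is exactly what the paper leaves implicit in ``the result follows.''
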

\begin{proof}
Sufficiency is clear. \isar{Conversely, supposing that ${\bbA}$ is $\Phi$-algebraic, we can compute, for any $x,y\in\bbA_0$,
\begin{align}
\Downarrow\!(y,x)
&=\bigwedge_{\phi\in(\Phi_\isar{s}{\bbA})_{0}}{\bbA}(x, \sup(\phi))\searrow \phi(y)\\
&\leq {\bbA}(x,\sup(S_{\bbA}(x)))\searrow S_{\bbA}(x)(y) & \mbox{$S_{\bbA}(x)\in(\Phi_s\bbA)_0$}\\
&\leq {\bbA}(x,x)\searrow S_{\bbA}(x)(y) & \mbox{$\sup(S_{\bbA}(x))=x$}\\
&\leq 1_{tx}\searrow \Sigma_{\bbA}(y,x)&\mbox{$1_{tx}\leq\bbA(x,x)$}\\
&=\Sigma_{\bbA}(y,x)
\end{align}
Thus $F_{\Downarrow}\leq S_{\bbA}$, and since the converse inequality always holds, the result follows.}
\end{proof}
\begin{proposition}\label{prop-alg-2}
Let $\Phi$ be a saturated class of presheaves on $\Cat(\Q)$. Then for any $\Q$-category $\bbA$, $\Phi{\bbA}$ is \isar{($\Phi$-cocomplete and)} $\Phi$-algebraic.
\end{proposition}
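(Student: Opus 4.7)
My plan is to reduce the $\Phi$-algebraicity of $\Phi\bbA$ to Yoneda density for the Yoneda embedding, combined with the $\Phi$-compactness of representables from Proposition \ref{prop-repr}. Write $Y'_{\bbA}\:\bbA\to\Phi\bbA$ for the co-restriction of the Yoneda embedding. By Proposition \ref{prop-saturated}, $\Phi\bbA$ is $\Phi$-cocomplete, so fix any $\phi\in(\Phi\bbA)_0$ and consider the presheaf $T_{\phi}:=\phi\circ(Y'_{\bbA})^{\nat}\in\P(\Phi\bbA)$. The heart of the argument will be that $S_{\Phi\bbA}(\phi)=T_{\phi}$, which is a $\Phi$-ideal on $\Phi\bbA$ with supremum $\phi$.

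I first establish the ideal-and-sup properties of $T_{\phi}$. For each $a\in\bbA_0$, $(Y'_{\bbA})^{\nat}(-,a)=Y_{\Phi\bbA}(Y'_{\bbA}(a))$ is the representable on $Y'_{\bbA}(a)$, hence a $\Phi$-ideal by clause (i) of Definition \ref{class}; thus $(Y'_{\bbA})^{\nat}$ is a $\Phi$-distributor, and since $\phi$ is one as well, Proposition \ref{prop-saturated}(3) gives $T_{\phi}\in(\Phi(\Phi\bbA))_0$. For $\sup(T_{\phi})=\phi$, the standard Yoneda density gives $\phi=\colim(\phi,Y_{\bbA})$ in $\P\bbA$; since $\Phi\bbA$ admits $\colim(\phi,Y'_{\bbA})$ by $\Phi$-cocompleteness and since the fully faithful inclusion $i_{\bbA}\:\Phi\bbA\to\P\bbA$ is $\Phi$-cocontinuous by Proposition \ref{prop-saturated}, this colimit lifts to equal $\phi$ in $\Phi\bbA$, which coincides with $\sup(T_{\phi})$ by the general relation between weighted colimits and suprema of presheaves.

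For $S_{\Phi\bbA}(\phi)=T_{\phi}$, the inequality $T_{\phi}\leq S_{\Phi\bbA}(\phi)$ is immediate by restricting the join $\Sigma_{\Phi\bbA}(\psi,\phi)=\bigvee_{\chi\in(\Phi\bbA)_c}\Phi\bbA(\chi,\phi)\circ\Phi\bbA(\psi,\chi)$ to the subfamily of compact representables $\chi=Y'_{\bbA}(a)$, and applying Yoneda to get $\Phi\bbA(Y'_{\bbA}(a),\phi)=\phi(a)$. For the reverse direction, with $T_{\phi}\in(\Phi_{s}(\Phi\bbA))_0$ now certified, condition (5) of Proposition \ref{prop-2} applied to any compact $\chi$ yields $\Phi\bbA(\chi,\phi)=\Phi\bbA(\chi,\sup(T_{\phi}))\leq T_{\phi}(\chi)$; substituting this into the displayed formula for $\Sigma_{\Phi\bbA}(\psi,\phi)$ and collapsing the resulting double join over $\chi$ via the distributor inequality $\bigvee_{\chi}\Phi\bbA(\chi,Y'_{\bbA}(a))\circ\Phi\bbA(\psi,\chi)\leq\Phi\bbA(\psi,Y'_{\bbA}(a))$ delivers $S_{\Phi\bbA}(\phi)(\psi)\leq T_{\phi}(\psi)$. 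The main obstacle is exactly this last inequality, whose cleanness depends crucially on having $T_{\phi}$ both as a $\Phi$-ideal and with $\sup(T_{\phi})=\phi$ already in hand; both facts are consequences of saturation. Once $S_{\Phi\bbA}(\phi)=T_{\phi}$ is established, both conditions of $\Phi$-algebraicity follow immediately.
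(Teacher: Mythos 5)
Your proof is correct, but it is organized differently from the paper's. The paper first shows that $(Y'_{\bbA})^{\to}\:\Phi\bbA\to\Phi(\Phi\bbA)$ is left adjoint to $\sup$, concludes via Proposition \ref{prop-cont} that $\Phi\bbA$ is $\Phi$-continuous with $F_{\Downarrow}=(Y'_{\bbA})^{\to}$, then shows $F_{\Downarrow}\leq S_{\Phi\bbA}$ using the compactness of representables, and invokes the characterization of Proposition \ref{prop-alg} (algebraic $=$ continuous $+$ $S=F_{\Downarrow}$). You instead bypass $F_{\Downarrow}$, continuity and Propositions \ref{prop-cont} and \ref{prop-alg} altogether: you verify the definition of $\Phi$-algebraicity directly by proving $S_{\Phi\bbA}(\phi)=T_{\phi}:=\phi\circ(Y'_{\bbA})^{\natural}$, where $T_{\phi}$ is a $\Phi$-ideal with $\sup(T_{\phi})=\phi$. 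Your inequality $T_{\phi}\leq S_{\Phi\bbA}(\phi)$ is the same Yoneda-plus-Proposition-\ref{prop-repr} computation as in the paper, while your converse $S_{\Phi\bbA}(\phi)\leq T_{\phi}$ — via Proposition \ref{prop-2}(5) applied to arbitrary compacts and the hom-transitivity of $\Phi\bbA$ — replaces the paper's appeal to the general fact $\Sigma_{\Phi\bbA}\leq\ \Downarrow$ together with the identification $\Downarrow\!(-,\phi)=(Y'_{\bbA})^{\to}(\phi)$; all the steps check out (your colimit-preservation argument for $\sup(T_{\phi})=\phi$ is exactly the content of Proposition \ref{prop-saturated}(1), and the double-join collapse is sound). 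What the paper's route buys in addition is an explicit identification of the way-below distributor of $\Phi\bbA$ (namely $F_{\Downarrow}=(Y'_{\bbA})^{\to}$), hence the continuity statement as a by-product; your route is slightly more economical in the lemmas it uses. Two minor remarks: for $T_{\phi}\in(\Phi(\Phi\bbA))_0$ you do not need saturation via Proposition \ref{prop-saturated}(3) — clause (ii) of Definition \ref{class} applied to $\phi\in(\Phi\bbA)_0$ and the functor $Y'_{\bbA}\:\bbA\to\Phi\bbA$ gives it at once — and your final clause ``$G\circ F$'' style conclusion should state explicitly that $\Phi$-cocompleteness (Proposition \ref{prop-saturated}) guarantees $T_{\phi}\in(\Phi_{s}(\Phi\bbA))_0$, which you do use when invoking Proposition \ref{prop-2}(5).
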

\begin{proof}
\isar{Making use of Proposition \ref{prop-saturated}, we denote by $Y'_{\bbA}:{\bbA}\to\Phi{\bbA}$ the factorisation of the Yoneda embedding $Y_{\bbA}\:\bbA\to\P\bbA$ through the full embedding $i_{\bbA}:\Phi{\bbA}\to{\P}{\bbA}$. For the functor $(Y'_{\bbA})^{\rightarrow}\:\Phi\bbA\to\Phi(\Phi\bbA)$ and any $\phi\in (\Phi{\bbA})_{0}$ we may then compute that}\isarnote{Slightly rewritten computation.}
\begin{align}
\sup\Big((Y'_{\bbA}){}^{\to}(\phi)\Big)
&=\sup\Big(((i_{\bbA})^{\to}\circ Y'_{\bbA}{}^{\to})(\phi)\Big)&\mbox{Proposition \ref{prop-saturated}}\\
&=\sup\Big((Y_{\bbA})^{\to}(\phi)\Big) & \mbox{Equation \ref{AJ}}\\
&=\sup\Big(\phi\circ(Y_{\bbA})^\nat\Big) & \mbox{definition of $(Y_{\bbA})^{\to}$}\\
&=\phi\circ(Y_{\bbA})^{\natural}\circ(Y_{\bbA})_{\natural}&\mbox{Equation \ref{AO}}\\
&= \phi\circ\bbA&\mbox{Equation \ref{YonedaFF}}\\
&=\phi.
\end{align}
\isar{Similarly, for any $\Psi\in(\Phi(\Phi\bbA))_0$, we can spell out that\isarnote{A slightly different argument.}
\begin{align}
(Y'_{\bbA}){}^{\to}\Big(\sup(\Psi)\Big)
& = \Psi\circ(i_{\bbA})^\nat\circ(Y_{\bbA})_\nat\circ(Y'_{\bbA})^\nat & \mbox{as above}\\
& = \Psi\circ(i_{\bbA})^\nat\circ(i_{\bbA}\circ Y'_{\bbA})_\nat\circ(Y'_{\bbA})^\nat & \mbox{$Y_{\bbA}=i_{\bbA}\circ Y'_{\bbA}$}\\
& = \Psi\circ(i_{\bbA})^\nat\circ (i_{\bbA})_\nat\circ(Y'_{\bbA})_\nat\circ(Y'_{\bbA})^\nat & \mbox{Equation \ref{AE}}\\
& = \Psi\circ(Y'_{\bbA})_\nat\circ(Y'_{\bbA})^\nat & \mbox{$i_{\bbA}$ is fully faithful}\\
& \leq \Psi. & \mbox{$(Y'_{\bbA})_\nat\dashv (Y'_{\bbA})^\nat$}
\end{align}
That is, $(Y'_{\bbA})^\to\:\Phi\bbA\to\Phi(\Phi\bbA)$ is left adjoint to $\sup\:\Phi(\Phi\bbA)\to\Phi\bbA$. By Proposition \ref{prop-cont}, $\Phi\bbA$ is $\Phi$-continuous and $F_{\Downarrow}=(Y'_{\bbA})^\to$ on $\Phi\bbA$.} \isarnote{Slightly different notations.} Finally, for any $\phi_{1},\phi_{2}\in (\Phi{\bbA})_{0}$ we compute that
\begin{align}
F_{\Downarrow}(\phi_1)(\phi_2)
& = \Big((Y'_{\bbA})^\to(\phi_1)\Big)(\phi_2) & \mbox{shown above}\\
& = \Big(\phi_1\circ(Y'_{\bbA})^\nat\Big)(\phi_2) & \mbox{definition of $(Y'_{\bbA})^\to$} \\
& = \Big(\bigvee_{x\in {\bbA}_{0}} \phi_{1}(x)\circ\Phi{\bbA}(-,Y'_{\bbA}(x))\Big)(\phi_2) & \mbox{definition of $(Y'_{\bbA})^\nat$} \\
&=\bigvee_{x\in {\bbA}_{0}} \phi_{1}(x)\circ\Phi{\bbA}(\phi_{2}, Y'_{\bbA}(x)) \\
&=\bigvee_{x\in {\bbA}_{0}} \Phi{\bbA}(Y'_{\bbA}(x), \phi_{1})\circ\Phi{\bbA}(\phi_{2}, Y'_{\bbA}(x)) & \mbox{Equation \ref{YonedaLemma}}\\
&\leq\bigvee_{\kappa\in((\Phi{\bbA})_{c})_{0}}\Phi\bbA(\kappa,\phi_{2})\circ\Phi{\bbA}(\phi_{1}, \kappa) & \mbox{Proposition \ref{prop-repr}}\\
&=\Sigma_{\Phi{\bbA}}(\phi_{2}, \phi_{1})\\
&=S_{\bbA}(\phi_1)(\phi_2)
\end{align}
\isar{So we find that $F_{\Downarrow}\leq S_{\bbA}$ on $\Phi\bbA$, and since the inverse inequality always holds, we find $F_{\Downarrow}= S_{\bbA}$. By Proposition \ref{prop-alg}, $\Phi\bbA$ is $\Phi$-algebraic.}
\end{proof}
\begin{proposition}\label{prop-alg-3}\isarnote{Equivalence instead of isomorphism.}
\isar{Let $\Phi$ be a class of presheaves on $\Cat(\Q)$.} If ${\bbA}$ is a $\Phi$-cocomplete and $\Phi$-algebraic ${\Q}$-category, then ${\bbA}\simeq\Phi({\bbA}_{c})$.
\end{proposition}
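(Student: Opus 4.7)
The plan is to construct mutually quasi-inverse $\Q$-functors between $\bbA$ and $\Phi(\bbA_c)$. On one side, define $L\:\Phi(\bbA_c)\to\bbA$ by $L(\mu):=\sup(j_\bbA^\to(\mu))$; this is well-defined because $j_\bbA^\to$ sends $\Phi$-ideals to $\Phi$-ideals (Definition \ref{class}) and $\bbA$ is $\Phi$-cocomplete. On the other side, define $R\:\bbA\to\Phi(\bbA_c)$ by $R(x)(a):=\bbA(a,x)$ for $a\in(\bbA_c)_0$, which as a presheaf on $\bbA_c$ is just $j_\bbA^\leftarrow(Y_\bbA(x))$.

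The main obstacle is to verify that $R(x)$ actually is a $\Phi$-ideal on $\bbA_c$. Using the $\Phi$-compactness of any $a\in(\bbA_c)_0$ together with $\Phi$-algebraicity of $\bbA$ (via the identity $F_{\Downarrow}=S_\bbA$ from Proposition \ref{prop-alg}), we get $R(x)(a)=\bbA(a,x)=\Downarrow\!(a,x)=S_\bbA(x)(a)$, so $R(x)=j_\bbA^\leftarrow(S_\bbA(x))$, while a direct computation also shows $S_\bbA(x)=j_\bbA^\to(R(x))$. By $\Phi$-algebraicity, $S_\bbA(x)\in(\Phi\bbA)_0$. To reflect this through the adjunction $j_\bbA^\to\dashv j_\bbA^\leftarrow$ and conclude $R(x)\in(\Phi(\bbA_c))_0$, one naturally appeals to the saturation of $\Phi$: under saturation, $\Phi(\bbA_c)$ is the free $\Phi$-cocompletion of $\bbA_c$, and the inclusion $j_\bbA\:\bbA_c\hookrightarrow\bbA$ extends uniquely to a $\Phi$-cocontinuous functor $\Phi(\bbA_c)\to\bbA$ (which agrees with $L$) admitting a right adjoint landing inside $\Phi(\bbA_c)$.

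Once $R$ is known to co-restrict to $\Phi(\bbA_c)$, the remaining computations are direct. For $R\circ L=1_{\Phi(\bbA_c)}$, using $\Phi$-compactness of $a\in(\bbA_c)_0$ (Proposition \ref{prop-2}(5)) together with the fact that $j_\bbA$ is fully faithful (so that $j_\bbA^\to(\mu)(a)=\mu(a)$) we compute
\[R(L(\mu))(a)=\bbA\big(a,\sup(j_\bbA^\to(\mu))\big)=j_\bbA^\to(\mu)(a)=\mu(a).\]
For $L\circ R\cong 1_\bbA$, the $\Phi$-algebraicity of $\bbA$ yields directly
\[L(R(x))=\sup(j_\bbA^\to(R(x)))=\sup(S_\bbA(x))\cong x.\]
Together these establish the equivalence $\bbA\simeq\Phi(\bbA_c)$ as claimed.
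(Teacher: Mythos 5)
Your construction is exactly the paper's: your $L$ and $R$ are the paper's composites $\sup\circ(j_{\bbA})^{\to}$ and $(j_{\bbA})^{\leftarrow}\circ Y'_{\bbA}$, and your two computations ($R\circ L=1_{\Phi(\bbA_c)}$ via Proposition \ref{prop-2}(5) plus full faithfulness of $j_{\bbA}$, and $L\circ R\cong 1_{\bbA}$ via $j_{\bbA}^{\to}(R(x))=S_{\bbA}(x)$ and $\Phi$-algebraicity) are the same two computations the paper performs. So the skeleton of your argument coincides with the paper's proof.

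The problem is the step you yourself flag as the main obstacle, namely that $R(x)=\bbA(j_{\bbA}-,x)$ lies in $(\Phi(\bbA_c))_0$. First, the proposition is stated for an \emph{arbitrary} class of presheaves, so saturation is not among your hypotheses (it only enters later, in Theorem \ref{thm-alg}); a proof that needs it proves a weaker statement. Second, and more seriously, even granting saturation the justification you give is circular: the free $\Phi$-cocompletion property of $\Phi(\bbA_c)$ yields the $\Phi$-cocontinuous extension $L$ of $j_{\bbA}$, but it does \emph{not} yield a right adjoint for $L$. If $L$ had a right adjoint $R'$, then evaluating the adjunction at representables (using $L(Y_{\bbA_c}(a))\cong j_{\bbA}a$ and the Yoneda Lemma) forces $R'(x)(a)=\bbA(j_{\bbA}a,x)$, i.e.\ $R'=R$; so ``$L$ admits a right adjoint landing inside $\Phi(\bbA_c)$'' is literally equivalent to the claim you are trying to establish, and it fails for general $\Phi$-cocontinuous functors out of a free $\Phi$-cocompletion (for $\Phi=\mathcal{C}^{D}$, for instance, a presheaf of the form $\bbA(H-,x)$ is rarely a conical ideal). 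Any honest argument here has to use compactness/algebraicity of $\bbA$, not just saturation. To be fair, the paper's own proof also passes over this point in silence — it simply draws $(j_{\bbA})^{\leftarrow}\:\Phi\bbA\to\Phi(\bbA_c)$ in its diagram without comment — so you have correctly located the delicate spot; but the fix you propose does not close it, and if you instead take the co-restriction of $R$ for granted, as the paper does, your proof is simply the paper's proof.
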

\begin{proof}\isarnote{Slightly changed notations and presentation.}
For the full embedding $j_{\bbA}:{\bbA}_{c}\to {\bbA}$, the co-restriction $Y'_{\bbA}\:\bbA\to\Phi\bbA$ of the Yoneda embedding, and its left adjoint $\sup\:\Phi\bbA\to\bbA$, consider the composite functors
\begin{equation}\label{}
\xymatrix{
& \Phi\bbA\ar[dr]^{(j_{\bbA})^{\leftarrow}} \\
\bbA\ar[ur]^{Y'_{\bbA}}\ar@{-->}[rr]_F & & \Phi(\bbA_c)}
\quad
\xymatrix{
& \Phi\bbA\ar[dr]^{\sup} \\
\Phi(\bbA_c)\ar[ur]^{(j_{\bbA})^\to}\ar@{-->}[rr]_G & & \bbA}
\end{equation}
Then we have, for any $x\in\bbA_0$,
\begin{align}
(G\circ F)(x)
&=(\sup\circ(j_{\bbA})^\to\circ(j_{\bbA})^{\leftarrow}\circ Y'_{\bbA})(x)\\
&=\sup\Big(\bbA(-,x)\circ (j_{\bbA})_\nat\circ(j_{\bbA})^\nat\Big)\\
&=\sup(\Sigma_\bbA(-,x))\\
&=\sup(S_{\bbA}(x))\\
&\cong x,
\end{align}
so $G\circ F\cong 1_{\bbA}$. Similarly, for any $\phi\in\Phi(\bbA_c)$,
\begin{align}
(F\circ G)(\phi)
&=((j_{\bbA})^{\leftarrow}\circ Y'_{\bbA}\circ\sup\circ(j_{\bbA})^\to)(\phi)\\
&= Y'_{\bbA}\Big(\sup((j_{\bbA})^\to (\phi))\Big)\circ (j_{\bbA})_\nat \\
&=\bbA\Big(-,\sup((j_{\bbA})^\to (\phi))\Big)\circ\bbA(j_{\bbA}-,-)\\
&=\bbA\Big(j_{\bbA}-,\sup((j_{\bbA})^\to (\phi))\Big)\\
&=\phi,
\end{align}
(the last equality follows from Proposition \ref{prop-2} (5)) and so $G\circ F=1_{\Phi(\bbA_c)}$ too. This gives the required equivalence.
\end{proof}
By Propositions 6.7 and 6.8 we can conclude that:
\begin{theorem}\label{thm-alg}
Let $\Phi$ be a saturated class of presheaves on $\Cat(\Q)$. Then a $\Q$-category ${\bbA}$ is $\Phi$-cocomplete and $\Phi$-algebraic if and only if ${\bbA}\simeq\Phi{\bbB}$ for some ${\Q}$-category ${\bbB}$.
\end{theorem}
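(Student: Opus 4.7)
The plan is to derive the theorem as an immediate synthesis of Propositions \ref{prop-alg-2} and \ref{prop-alg-3}, with the main technical point being to confirm that $\Phi$-cocompleteness and $\Phi$-algebraicity are invariant under equivalence of $\Q$-categories.

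For the \emph{only if} direction, suppose $\bbA$ is $\Phi$-cocomplete and $\Phi$-algebraic. Then I would simply invoke Proposition \ref{prop-alg-3} to obtain an equivalence $\bbA\simeq\Phi(\bbA_c)$, and take $\bbB:=\bbA_c$ as the witnessing $\Q$-category. No further work is needed here.

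For the \emph{if} direction, suppose $\bbA\simeq\Phi\bbB$ for some $\Q$-category $\bbB$. By Proposition \ref{prop-alg-2}, $\Phi\bbB$ is both $\Phi$-cocomplete and $\Phi$-algebraic. It then suffices to observe that each of these properties is preserved under equivalence in $\Cat(\Q)$. The $\Phi$-cocompleteness is preserved because an equivalence $F\dashv\dashv G\:\bbA\to\Phi\bbB$ transports weighted colimits: for any $\mu\in(\Phi\bbA)_0$, the supremum of $\mu\circ F^\nat$ exists in $\Phi\bbB$ (saturation ensures this presheaf is a $\Phi$-ideal), and applying $G$ returns a supremum of $\mu$ in $\bbA$ (up to isomorphism), using characterization (3) of Proposition \ref{Phi-cocomplete}.

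The $\Phi$-algebraicity carries over because the $\Phi$-compact elements are defined via the intrinsic inequality $Y_{\bbA}(a)\leq F_{\Downarrow}(a)$ (equivalently, by condition (5) of Proposition \ref{prop-2}, which is phrased purely in terms of hom-arrows and suprema of $\Phi$-ideals); hence $F$ restricts to an equivalence $\bbA_c\simeq(\Phi\bbB)_c$, and the distributor $\Sigma_{\bbA}$ corresponds to $\Sigma_{\Phi\bbB}$ under the equivalence. Consequently $S_{\bbA}(x)$ exists in $(\Phi_s\bbA)_0$ with $x\cong\sup(S_{\bbA}(x))$ whenever the analogous statement holds in $\Phi\bbB$, yielding $\Phi$-algebraicity of $\bbA$. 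I expect the verification of invariance under equivalence to be the only mildly delicate step, but it is essentially formal once one writes out the definitions; the substance of the theorem has already been packaged into Propositions \ref{prop-alg-2} and \ref{prop-alg-3}.
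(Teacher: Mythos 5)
Your proposal is correct and follows essentially the same route as the paper, which proves the theorem simply by combining Propositions \ref{prop-alg-2} and \ref{prop-alg-3}. The only addition is your explicit check that $\Phi$-cocompleteness and $\Phi$-algebraicity are invariant under equivalence of $\Q$-categories, a formal verification the paper leaves implicit (note only that the fact that $\mu\circ F^{\natural}$ is a $\Phi$-ideal comes from condition (ii) of Definition \ref{class} rather than from saturation).
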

\isar{If $\bbA$ is a skeletal $\Q$-category, then the equivalence in the above Theorem is in fact an isomorphism.}

\section{Conclusion and some further work}\label{S7}

In this paper, we develop some basic concepts of continuous ${\Q}$-categories. We mainly proposed three kinds of saturated classes of presheaves. Not only are they proper generalizations of the concept of ideals in domain theory but also they have different properties. This may provide us an useful tool to develop a theory of dynamic domains. Up to now, the concept of continuity in poset\isar{s} has undergone a series of generalizations\isar{, as indicated in the} following diagram. (Here the concepts of continuous posets, $Z$-continuous posets and $J$-continuous quantale-enriched categories are in the sense of \cite{Gie2003}, \cite{Baranga1996} and \cite{HW2011}, respectively.)

\begin{picture}(100,200)(0,0) \put(140,160){\framebox(100,20){Continuous posets}} \put(190,155){\vector(0,-1){20.0}}

\put(130,110){\framebox(110,20){$Z$-continuous posets}} \put(190,105){\vector(0,-1){20.0}}

\put(250,110){\framebox(60,20){$Z$-level}}

\put(20,60){\framebox(220,20){$J$-continuous quantale-enriched categories}} \put(190,55){\vector(0,-1){20.0}}

\put(250,60){\framebox(110,20){Many-valued level}}

\put(10,10){\framebox(230,20){$\Phi$-continuous quantaloid-enriched categories}}

\put(250,10){\framebox(180,20){Many-typed and many-valued level}} \end{picture}

With the rapid development of related disciplines, there are many aspects of continuous quantaloid-enriched categories \isar{that} deserve further study:

(1) Completely distributive quantaloid-enriched categories are developed in \cite{Stubbe2007}. In classic domain theory, we know that there exist close relations between the category of continuous dcpos, the category of posets and the category of completely distributive lattices. Thus, we can investigate categorical relations between continuous quantaloid-enriched categories and some other structures such as completely distributive quantaloid-enriched categories.

(2) Exponentiable functors between quantaloid-enriched categories are studied in \cite{CleHof2009}. Turning to the continuous setting, this problem seems to be more complicated. Thus, how to characterize exponentiable functors between continuous quantaloid-enriched categories is an interesting problem.

(3) A theory of quantaloid-enriched topological spaces was developed by H\"{o}hle in \cite{Hohle2014}, which forms a common framework for many-valued topology as well as for non-commutative topology. This may give us an inspiration to study the topological aspects of continuous quantaloid-enriched categories.

Domain theory \isar{is} also studied in the framework of order-enriched categories, based on the concept of Kock-Z\"{o}berlein monads, by Hofmann and Sousa \cite{Hofmann2017}. These works can be viewed as generalizations of Domain Theory to categorical levels in different directions and they may provide inspirations to each other.

\section*{Acknowledgement}

This work is supported by the National Natural Science Foundation of China (Grant Nos.11871320, 11501048), Natural Science Basic Research Program of Shaanxi (Program No. 2022JM-032) and the Fundamental Research Funds for the Central Universities (Grant No. 300102122109). \isar{The authors thank the Referee for valuable suggestions.}

\end{document}